\documentclass[aos,preprint]{imsart}
\RequirePackage[OT1]{fontenc}
\RequirePackage{amsthm,amsmath}
\RequirePackage[numbers]{natbib}
\usepackage{pictexwd}
\usepackage{bm,amssymb,graphicx,url}
\usepackage{color}
\usepackage{epstopdf}
\usepackage[normalem]{ulem}

\usepackage[colorlinks,citecolor=blue,urlcolor=blue]{hyperref}
% provide arXiv number if available:
%\arxiv{arXiv:0000.0000}

% put your definitions there:
\startlocaldefs

\def\a{\alpha}
\def\b{\beta}
\def\th{\theta}
\def\R{\mathbb R}
\def\dd{\Delta}

\def\d{\delta}

\def\E{{\mathbb E}}

\def\IK{I\!\!K}

\def\P{{\mathbb P}}

\def\labda1{\lambda_1}
\def\labda2{\lambda_2}
\def\m{\mu}

\def\e{\varepsilon}
\def\f{\phi}

\def\k{\kappa}

\def\s{\sigma}

\def\comment#1{\relax}

\def\=in{\mathop{\rm =}}

\numberwithin{equation}{section}
\theoremstyle{plain}
\endlocaldefs

\usepackage{amsmath,here,amsfonts,latexsym,graphicx,here}
\usepackage{amsthm,curves}
\usepackage{graphicx}
\usepackage{caption}
\usepackage{subcaption}
\usepackage{float}

%\setlength{\textheight}{9in} \setlength{\topmargin}{0.0in}
%\advance \topmargin by -\headheight \advance \topmargin by
%-\headsep

\def\m{\mu}
\def\th{\theta}

\def\P{{\mathbb P}}
\def\Q{{\mathbb Q}}

\def\m{\mu}

\def\th{\theta}

\def\P{{\mathbb P}}

\def\dd{\Delta}

\newtheorem{theorem}{Theorem}[section]
\newtheorem{lemma}{Lemma}[section]

\newtheorem{remark}{Remark}[section]
\newtheorem{corollary}{Corollary}[section]
\newtheorem{example}{Example}[section]
\newtheorem{definition}{Definition}[section]

\begin{document}
\begin{frontmatter}
\title{Nonparametric estimation of the incubation time distribution}
\runtitle{incubation time distribution}

\begin{aug}
\author{\fnms{Piet} \snm{Groeneboom}\corref{}\ead[label=e1]{P.Groeneboom@tudelft.nl}
\ead[label=u1,url]{http://dutiosc.twi.tudelft.nl/\textasciitilde pietg/}}
\runauthor{P.\ Groeneboom}
\affiliation{Delft University of Technology}

\address{Delft University of Technology, Mekelweg 4, 2628 CD Delft,
	The Netherlands.\\ 
	\printead{e1}} 
\end{aug}

\begin{abstract}
Nonparametric maximum likelihood estimators (MLEs) in inverse problems often have non-normal limit distributions, like Chernoff's distribution.
However, if one considers smooth functionals of the model, with corresponding functionals of the MLE, one gets normal limit distributions and faster rates of convergence. We demonstrate this for a model for the incubation time of a disease. The usual approach in the latter models is to use parametric distributions, like Weibull and gamma distributions, which leads to inconsistent estimators. Smoothed bootstrap methods are discussed for constructing confidence intervals. The classical bootstrap, based on the nonparametric MLE itself, has been proved to be inconsistent in this situation.
\end{abstract} 

\begin{keyword}[class=AMS]
\kwd[Primary ]{62G05}
\kwd{62N01}
\kwd[; secondary ]{62-04}
\end{keyword}

\begin{keyword}
\kwd{nonparametric MLE}
\kwd{nonparametric SMLE}
\kwd{Chernoff's distribution}
\kwd{kernel estimates}
\kwd{smoothed bootstrap}
\kwd{incubation time}
\kwd{COVID-19}
\end{keyword}

\end{frontmatter}

\section{Introduction}
\label{section:intro}
We consider the following model, used for estimating the distribution of the incubation time of a disease. There is an infection time $U$, uniformly distributed on an interval $[0,E]$, where $E$ (``exposure time'') has an absolutely continuous distribution function $F_E$ on an interval $[0,M_2]$, and where $U$ is uniform on $[0,E]$, conditionally on $E$. Moreover there is an incubation time $V$ with an absolutely continuous distribution $F_0$ on an interval $[0,M_1]$ and a time for getting symptomatic $S$, where $S=U+V$. We assume that $U$ and $V$ are independent, conditionally on $E$.  Our observations consist of the pairs of exposure times and times of getting symptomatic
\begin{align*}
(E_i,S_i),\qquad i=1,\dots,n.
\end{align*}
The model is for example considered in \cite{reich:09}, \cite{tom_gianpi:19}, \cite{backer:20} and \cite{piet:21}.

We define the (convolution) density $q_F$ of $(E_i,S_i)$ by
\begin{align}
\label{convolution}
q_F(e,s)&=e^{-1}\{F(s)-F(s-e)\}\nonumber\\
&=e^{-1}\int_{u=(s-e)_+}^s \,dF(u),\qquad e>0,\,s\in[0,M],
\end{align}
w.r.t.\ $\m$, which is the product of the measure $dF_E$ of the exposure time $E$ and Lebesgue measure on $[0,M]$, where $M=M_1+M_2$ is the upper bound for the time $S$ of getting symptomatic. 
We define the underlying measure $Q_0$ for $(E_i,S_i)$ by
\begin{align}
\label{def_Q_0}
dQ_0(e,s)=q_F(e,s)\,ds\,\,dF_E(e),\qquad s\in[0,M],\qquad e\in(0,M_2].
\end{align}

For estimating the distribution function $F_0$ of the incubation time, usually parametric distributions are used, like the Weibull, log-normal or gamma distribution. However, in \cite{piet:21} the nonparametric maximum likelihood estimator is used. The maximum likelihood estimator $\hat F_n$ maximizes the function
\begin{align}
\label{loglikelihood}
\ell(F)=n^{-1}\sum_{i=1}^n\log\left\{F(S_i)-F(S_i-E_i)\right\}
\end{align}
over {\it all} distribution functions $F$ on $\R$ which satisfy $F(x)=0$, $x\le 0$, see \cite{piet:21}.

Although the model is rather different, the algorithmic problem of computing the MLE has similarities with the problem of computing the MLE in the so-called interval censoring, case 2, model. In the interval censoring, case 2,  model the log likelihood is of the form
\begin{align}
\label{loglikelihood_IC}
\ell(F)&=\sum_{i=1}^n\left\{\dd_{i1}\log F(U_i)+\dd_{i2}\log\{F(V_i)-F(U_i)\}+\dd_{i3}\log\{1-F(V_i)\}\right\},
\end{align}
where $U_i<V_i$ are observation times and we only have information on whether our hidden variable of interest, with distribution function $F$,  is to the left of $U_i$ ($\dd_{i1}=1$), between $U_i$ and $V_i$ ($\dd_{i2}=1$), or to the right of $V_i$ ($\dd_{i3}=1$), see \cite{piet_geurt:14} and \cite{GrWe:92}.

For the incubation time model we have a formally similar way of writing the log likelihood, as can be seen in the following way. First of all, we can introduce, as in \cite{{piet:21}}, the indicator $\dd_{i1}$, defined by
\begin{align}
\label{delta1}
\dd_{i1}=\{S_i\le E_i\}.
\end{align}
If $\dd_{i1}=1$, then $S_i\le E_i$, leading to a term $\log F(S_i)$ in the log likelihood. We can also introduce a second and third indicator $\dd_{i2}$, and $\dd_{i3}$, which depend on whether $S_i\le \max_j(S_j-E_j)$ or $S_i>\max_j(S_j-E_j)$. Note that if $S_i>\max_j(S_j-E_j)$, the distribution function $F$ maximizing (\ref{loglikelihood}) will take the value $1$ at $S_i$, since there is no term $\log\{F(S_j)-F(S_j-E_j)\}$ with $S_j-E_j\ge S_i$. So there is no impediment to giving $F(S_i)$ its maximal value, which is $1$.

Hence, defining this time
\begin{align}
\label{delta2}
\dd_{i2}=\{E_i<S_i\le \max_{j:S_j>E_j}(S_j-E_j)\},
\end{align}
and
\begin{align}
\label{delta3}
\dd_{i3}=\{E_i<S_i,\,S_i>\max_{j:S_j>E_j}(S_j-E_j)\},
\end{align}
we can write the log likelihood in the incubation time model in the form
\begin{align}
\label{loglikelihood_incub}
\ell(F)&=\sum_{i=1}^n\left[\dd_{i1}\log F(S_i)+\dd_{i2}\log\{F(S_i)-F(S_i-E_i)\}+\dd_{i3}\log\{1-F(S_i-E_i)\}\right].
\end{align}
with the $\dd_{ij}$'s defined by (\ref{delta1}) to (\ref{delta3}), using a preliminary reduction of the maximization problem that was also used in \cite{GrWe:92} in the interval censoring, case 2, problem. Note, however, that the indicators $\dd_{i2}$ and $\dd_{i3}$ have a very different meaning in the interval censoring model.

\begin{remark}
\label{remark_M1}
{\rm
Note that if $S_i>M_1$, where $M_1$ is the (unknown) upper bound for the length of the incubation time, we must have:
\begin{align*}
S_i>\max_{j:S_j>E_j}(S_j-E_j)
\end{align*}
since $\max_{j:S_j>E_j}(S_j-E_j)$ is a lower bound for $M_1$. So a distribution function $F$, maximizing the log likelihood, will assign the value $1$ to $F(S_i)$ if $S_i>M_1$.
}
\end{remark}

The limit distribution of the nonparametric MLE of the incubation time distribution was unknown, but because of the (at least algorithmic) similarity of its computation to the computation of the nonparametric MLE in the interval censoring problem, one would expect that similar techniques could be used in its derivation. The algorithmic similarity was indeed used in \cite{piet:21}, but the limit distribution remained an open problem.

On the other hand, the limit distribution of the MLE for the interval censoring problem was derived in \cite{piet:96}, in the so-called strictly separated case, where the length of the observation intervals has a strictly positive lower bound. Unfortunately, the proof is very complicated, and the result seems to be little known. Nevertheless, in the present absence of other methods, this is also the way we proceed in this paper, where we prove the limit result using similar methods, in the hope that this will revive the interest in these matters.

We give the result on the convergence of the rescaled MLE to Chernoff's limit distribution in Section \ref{section:limit distribution}, 
under a condition that seems somewhat similar to the strict separation hypothesis in the interval censoring problem. There is the general expectation that Chernoff's limit distribution will often occur as universal limit distribution in this context of inverse problems, but the difficulty of proving it lies in the fact that we have to derive it from the properties of solutions of (Fredholm) integral equations and that we do not have explicit representations to go on.

As a preparation to this result, we first characterize the MLE as the derivative of the least convex minorant off a self-induced cusum diagram in section \ref{section:char_MLE}. The cusum diagram is an often used tool in the theory of isotonic regression (because the distribution function is monotone, our estimation problem is an isotonic regression problem), but the peculiar feature of the cusum diagrams used here is that they contain the solution $\hat F_n$ (the MLE) itself in their definition. The necessary and sufficient condition in the characterization of the MLE in this way are given in section \ref{section:char_MLE}. We illustrate the (iterative) algorithm for computing the MLE on a data set on COVID-19, also analyzed in \cite{piet:21}. Algorithms of this type (the iterative convex minorant algorithms) were proved to converge in \cite{Jon:98}.

The characterization of section \ref{section:char_MLE} is used to prove consistency of the MLE  in section \ref{section:consistency}. The convergence to Chernoff's distribution is then given in section \ref{section:limit distribution}, where also some numerical results on its variance are given.

Then, in section \ref{section:bootstrap_incubation_time} we define the Smoothed Maximum Likelihood Estimator (SMLE) $\tilde F_{nh}(t)$ at points $t$ away from the boundary by
\begin{equation}
\label{SMLE_incubation}
\tilde F_{nh}(t)=\int \IK_h(t-x)\,d\hat F_n(x),\qquad \IK_h(x)=\IK(x/h),
\end{equation}
where $\IK$ is an integrated kernel, defined by
\begin{equation}
\label{integrated_kernel2}
\IK(x)=\int_{-\infty}^x K(y)\,dy,
\end{equation}
and $K$ is  a symmetric kernel of the usual kind, used in density estimation. Near the boundary we use the Schuster-type boundary correction, also used in Section 11.3 of \cite{piet_geurt:14} in the definition of the SMLE. We assume that $K$ has support $[-1,1]$.

It is shown that the SMLE has a normal limit distribution and  has a faster convergence than the nonparamtric MLE itself (rate $n^{2/5}$ instead of $n^{1/3}$). 
In spite of the fact that its variance is implicitly defined as the solution of an integral equation, we can compute bootstrap confidence intervals for the distribution function via the SMLE, where we do not have to assume that the (asymptotic) variance is known. In this section we also give a method for determining the asymptotically optimal bandwidth automatically (see subsection \ref{subsection:bandwidth_choice}).

It has several advantages to base the confidence intervals on the SMLE instead of the MLE. It follows from recently developed theory (see, e.g., \cite{SenXu2015}) that direct bootstrap confidence intervals, based on the MLE, will be inconsistent. In principle one can base bootstrap confidence intervals on the MLE, using the SMLE intermediately to center the bootstrap intervals, as is done in \cite{SenXu2015} for the interval censoring model. But these intervals converge at a lower speed again and have the unpleasant property of having jumps at the locations of the jumps of the MLE, which is a kind of artefact of the method. Since one needs the SMLE anyway for making the bootstrap confidence intervals consistent, it seems preferable to also base the confidence intervals on the SMLE itself (as is done in the present paper), and not on the MLE. Moreover, the MLE has the property to jump to the value $1$ too early, something that is avoided if one uses the SMLE.

Finally, in sections \ref{section:comparison} to \ref{section:smooth_functionals} we show that the SMLE is competitive with the parametric models, even if the parametric assumptions are satisfied, and avoids the inconsistency that is inherent in the use of the latter methods. Moreover, one is discharged from the duty of introducing several of these parametric methods, since there is no sound reason to choose one of them.

Our computations are reproducible by using the {\tt R} scripts in \cite{github:20}.

\section{Characterization of the nonparametric maximum likelihood estimator (MLE) for the incubation time distribution}
\label{section:char_MLE}
First, just as in \cite{piet:21}, we reduce the problem to the problem of maximizing on the cone $\{\bm y\in\R_+^m:\bm y=(y_1,\dots,y_m),\,0\le y_1\le\dots\le y_m\}$, where the $y_j$ represent the values $F(S_i)$ and $F(S_i-E_i)$ and where $m$ is suitably chosen (see \cite{{piet:21}}).

Since, again just as in \cite{piet:21}, we want to reduce the problem to the problem of maximizing {\it inside} the cone, enabling us to differentiate w.r.t. the variables $y_i$, we define $F(S_i)=1$ if $S_i>\max_{j:S_j>E_j}(S_j-E_j)$, and $F(S_i-E_i)=0$ if $S_i-E_i<\min_jS_n$. Note that other choices of $F$ at these points would make the log likelihood smaller. For convenience of notation, we define
\begin{align}
\label{def_m_n}
m_n=\max_{j:S_j>E_j}(S_j-E_j).
\end{align}

Then, for a distribution function $F$ on $\R_+$, satisfying these conditions, we define the process
\begin{align}
\label{process_W}
W_{n,F}(t)&=\int\frac{\{s\le t\wedge e\}}{F(s)}\,d\Q_n(e,s)-\int\frac{\{0<s-e\le t,\,t<s\le m_n\}}{F(s)-F(s-e)}\,d\Q_n(e,s)\nonumber\\
&\quad+\int\frac{\{s\le t\wedge m_n,\,s>e\}}{F(s)-F(s-e)}\,d\Q_n(e,s)-\int\frac{\{s-e\le t,\,s>e\vee m_n\}}{1-F(s-e)}\,d\Q_n(e,s).
\end{align}
where we define $0/0=0$, and where $\Q_n$ is the empirical distribution of $(E_1,S_1),\dots,(E_n,S_n)$. Note that the MLE is only determined at the points $S_i$ and $(S_i-E_i)1_{\{S_i>E_i\}}$ and is zero at $0$.

We restrict the distribution functions, occurring in the problem of maximizing the likelihood to the following set.

\begin{definition}
\label{def_cal_F_n}
{\rm
Let ${\cal F}_n$ be the set of discrete distribution functions $F$, which only have mass at the points $S_i$ or $(S_i-E_i)1_{\{S_i>E_i\}}$ and satisfy 
\begin{align}
\label{upper_values}
F(S_i)=1\quad \text{\rm if }\quad S_i>\max_{j:S_j>E_j}(S_j-E_j),
\end{align}
and
\begin{align}
\label{lower_values}
F(S_i-E_i)=0\quad \text{\rm if }\quad S_i-E_i<\min_jS_j.
\end{align}
}
\end{definition}

\vspace{1cm}
Now, let $T_1< \dots < T_m$ be the points $S_i$ or $S_i-E_i$ such that $S_i$ not of type (\ref{upper_values}) and $S_i-E_i$ is not of type (\ref{lower_values}). Then $0<F(T_i)<1$ for $i=1,\dots,m$, if the log likelihood for $F$ is finite, and we can define:
\begin{align*}
{\cal Y}=\{\bm y\in(0,1)^m:\bm y=(y_1,\dots,y_m)=(F(T_1),\dots,F(T_m),\,F\in{\cal F}_n\}.
\end{align*}
The log likelihood for $F\in{\cal F}_n$, divided by $n$, can be written
\begin{align}
\label{emp_loglik}
\ell_n(F)&=\int\log\{F(s)-F(s-e)\}\,d\Q_n(s,e)\nonumber\\
&=\int_{s\le e}\log F(s)\,d\Q_n(e,s)+\int_{e<s\le m_n}\log\{F(s)-F(s-e)\}\,d\Q_n(e,s)\nonumber\\
&\quad\qquad\quad\qquad\quad\qquad\quad\qquad+\int_{s>e\vee m_n}\log\{1-F(s-e)\}\,d\Q_n(e,s).
\end{align}
The corresponding function of $\bm y=(y_1,\dots,y_m)=(F(T_1),\dots,F(T_m))$ is denoted by $\f_n(\bm y)$:
\begin{align}
\label{phi_n}
\f_n(\bm y)=\f_n(y_1,\dots,y_m)=\f_n(F(T_1),\dots,F(T_m))=\ell_n(F).
\end{align}

We have the following lemma.

\begin{lemma}
Let $W_{n,F}$ be defined by (\ref{process_W}) and let $\f_n$ be defined by (\ref{phi_n}). Moreover, let
\begin{align*}
\bm y=(F(T_1,\dots,F(T_m)),
\end{align*}
where $T_1< \dots < T_m$ are the points $S_i$ and $(S_i-E_i)1_{\{S_i>E_i\}}$ which are not of type (\ref{upper_values}) or (\ref{lower_values}), arranged in strictly increasing order 
Then
\begin{align}
\label{representation_partial_der}
\frac{\partial}{\partial y_j}\f_n(\bm y)=\dd W_{n,F}(T_j),\qquad j=1,\dots,m,
\end{align}
where $\dd W_{n,F}(T_j)$ is the increment of the process $W_{n,F}$ at $T_j$.
\end{lemma}
 
\begin{proof}
If $T_j$ corresponds to a value $S_i$ such that $S_i\le E_i$, the corresponding term in (\ref{emp_loglik}) is of the form
$\log F(S_i)$ and differentiation of (\ref{phi_n}) w.r.t. $F(S_i)$ gives the following contribution to the partial derivative $\frac{\partial}{\partial y_j}\f_n(\bm y)$:
$$
\sum_k\frac{\{S_k\le E_k,\,S_k=S_i\}}{nF(S_i)},
$$
where we make a summation over the $k$ to allow for possible ties at $S_i$.

If $T_j$ corresponds to a point $S_i-E_i$ such that $T_j=S_i-E_i>0$ and $T_j<S_i\le m_n$, we deal with a term
\begin{align*}
\log\{F(S_i)-F(S_i-E_i)\}
\end{align*}
in (\ref{emp_loglik}) and differentiation of $\f_n(\bm y)$ w.r.t. $F(S_i-E_i)$ gives a contribution
$$
-\sum_k\frac{\{T_j\vee E_k<S_k\le m_n,\,S_k-E_k=S_i-E_i\}}{n\{F(S_k)-F(S_k-E_k)\}},
$$
where we make again a summation over $k$ to allow for possible ties at $S_i$.

If $T_j$ corresponds to a point $S_i$ such that $T_j=S_i>E_i$ and $S_i\le m_n$, we deal with the argument $F(S_i)$ of the term
\begin{align*}
\log\{F(S_i)-F(S_i-E_i)\}
\end{align*}
in (\ref{emp_loglik}) and differentiation w.r.t. $F(S_i)$ gives a term
$$
\sum_k\frac{\{E_k<S_k\le m_n,\,S_k=S_i\}}{n\{F(S_k)-F(S_k-E_k)\}},
$$
where we make again a summation over $k$ to allow for possible ties at $S_i$.

Finally, if $T_j$ corresponds to a point $S_i-E_i$ such that $S_i>m_n\vee E_i$ , we deal with a term
\begin{align*}
\log\{1-F(S_i-E_i)\}
\end{align*}
and differentiation and differentiation of $\f_n(\bm y)$ w.r.t. $F(S_i-E_i)$ gives a contribution
$$
-\sum_k\frac{\{S_k> m_n,\,S_k-E_k=S_i-E_i\}}{n\{1-F(S_k-E_k)\}},
$$
So we get:
\begin{align*}
\frac{\partial}{\partial y_j}\f_n(\bm y)=\dd W_{n,F}(T_j),\qquad j=1,\dots,m.
\end{align*}
\end{proof}

The following lemma characterizes the MLE.
\begin{lemma}
\label{lemma:char_MLE}
Let the class of distribution functions ${\cal F}_n$ be defined by Definition \ref{def_cal_F_n}. Then $\hat F_n\in{\cal F}_n$ maximizes (\ref{loglikelihood}) over $F\in{\cal F}_n$ if and only if
\begin{enumerate}
\item[(i)] 
\begin{align}
\label{fenchel1}
\int_{u\in[t,\infty)}\,dW_{n,\hat F_n}(u)\le0,\qquad t\ge0,
\end{align}
\item[(ii)]
\begin{align}
\label{fenchel2}
\int \hat F_n(t)\,dW_{n,\hat F_n}(t)=0.
\end{align}
\end{enumerate}
where $W_{n,F_n}$ is defined by (\ref{process_W}). Moreover, $\hat F_n\in{\cal F}_n$ is uniquely determined by (\ref{fenchel1}) and (\ref{fenchel2}). 
\end{lemma}

\begin{proof}
Suppose (\ref{fenchel1}) and (\ref{fenchel2}) are satisfied. Letting $\bm y=(F(T_1,\dots,F(T_m))$ and $\bm x=(\hat F_n(T_1),\dots,\hat F_n(T_m))$, we get from the concavity of the logarithmic function:
\begin{align*}
&\ell_n(F)-\ell_n(\hat F_n)\le \langle \nabla\f_n(\bm x),\bm y-\bm x\rangle\\
&=\int\{F(t)-\hat F_n(t)\}\,dW_{n\hat F_n}(t)=\int F(t)\,dW_{n\hat F_n}(t),
\end{align*}
using (ii) in the last step. But
\begin{align*}
\int F(t)\,dW_{n\hat F_n}(t)=\int_{0\le u\le t}\,dF(u)\,dW_{n\hat F_n}(t)
=\int\left\{\int_{t\in[u,\infty)}\,dW_{n\hat F_n}(t)\right\}\,dF(t)\le0,
\end{align*}
using (i) in the last inequality.

Conversely, if $\hat F_n$ maximizes the likelihood and
\begin{align*}
\bm y=(\hat F_n(T_1),\dots,\hat F_n(T_m)),
\end{align*}
we have
\begin{align*}
\lim_{\e\downarrow0}\e^{-1}\left\{\f_n(y_1,\dots,y_i+\e,\dots,y_m+\e)-\f_n(y_1,\dots,y_m)\right\}
=\int_{u\in[T_i,\infty)}\,dW_{n,\hat F_n}(u)\le0,
\end{align*}
for $i=1,\dots,m$, and hence
\begin{align*}
\int_{u\in[t,\infty)}\,dW_{n,\hat F_n}(u)\le0,\qquad t\ge0,
\end{align*}
and similarly
\begin{align*}
\lim_{\e\downarrow0}\e^{-1}\left\{\f_n(y_1+\e y_1,\dots,y_m+\e y_m)-\f_n(y_1,\dots,y_m)\right\}=\int \hat F_n(t)\,dW_{n,\hat F_n}(t)=0.
\end{align*}
The uniqueness can be proved aong the same lines as in the proof of Proposition 1.3 in \cite{GrWe:92}. We omit the details.
\end{proof}

\begin{figure}[!ht]
\centering
\includegraphics[width=0.5\textwidth]{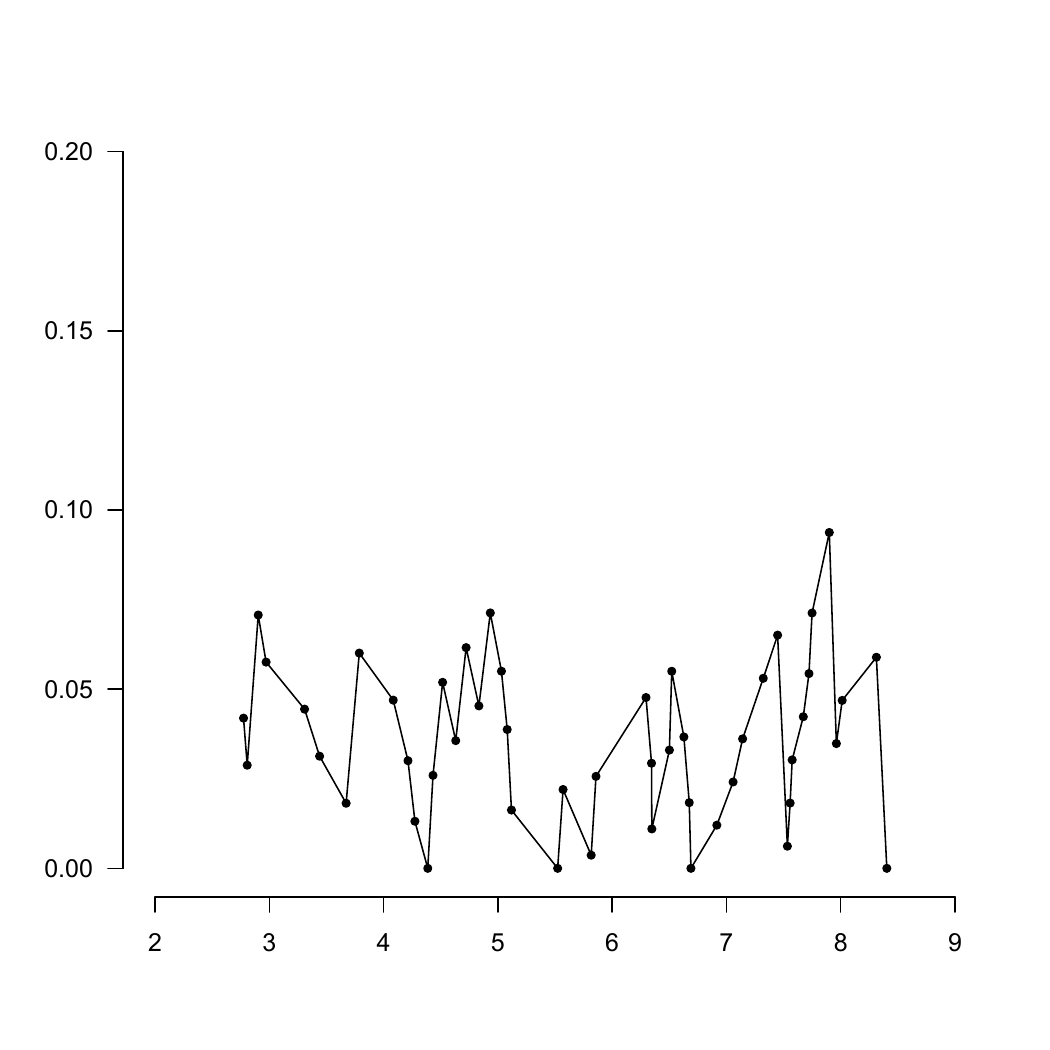}
\caption{The point process $\{(T_i,W_{n,\hat F_n}(T_i)),\,i=1,2,\dots\}$ for points $T_i$, running through points $S_i$ and $S_i-E_i$ between the minimum of the $S_i$ and the maximum of the $S_i-E_i$. Sample size $n=100$. The data correspond to a truncated Weibull distribution for the incubation time distribution, used in simulations of the incubation time distribution. The points are connected by line segments.}
\label{figure:W_process}
\end{figure}

A picture of the point process $\{(T_i,W_{n,\hat F_n}(T_i)),\,i=1,2,\dots\}$ in a simulation of the incubation time distribution, for $T_i$ running through the points $S_i$ and $(S_i-E_i)_+$ between the minimum of the $S_i$ and the maximum of the $(S_i-E_i)_+$. is given in Figure \ref{figure:W_process} for sample size $n=100$.
The process $W_{n,\hat F_n}$  touches zero at points just to the left of points of mass of $\hat F_n$.

We define the process $V_n$ by
\begin{align}
\label{def_V_n}
V_n(t)=\int_{u\in[0,t]}\hat F_n(u)\,dG_n(u)+W_{n,\hat F_n}(t),
\end{align}
where $G_n = G_{n,\hat F_n}$ is defined by (\ref{G_{n,F}}) for $F=\hat F_n$. Thus $\hat F_n$ is obtained by taking the left-continuous slope of the ``self-induced'' cusum diagram, defined by $(0,0)$ and points
\begin{align}
\label{cusum2}
\left(G_n(t),V_n(t)\right),\qquad t\ge0.
\end{align}

As explained in \cite{piet:21}, one can compute the MLE by the iterative convex minorant algorithm, where one computes iteratively the greatest convex minorant of the cusum diagram with points $(0,0)$ and points
\begin{align}
\label{cusum}
\left(G_{n,F}(t),\int_{u\in[0,t]} F(u)\,dG_{n,F}(u)+W_{n,F}(t)\right),
\end{align}
where the ``weight process'' $G_{n,F}$ is defined by
\begin{align}
\label{G_{n,F}}
G_{n,F}(t)&=\int_{s\le t}\frac{\{s\le t\wedge e\}}{F(s)^2}\,d\Q_n(e,s)+\int_{0<s-e\le t}\frac{\{0<s-e\le t,\,t<s\le m_n\}}{\{F(s)-F(s-e)\}^2}\,d\Q_n(e,s)\nonumber\\
&\qquad\qquad\qquad\qquad+\int_{s\vee e\le t}\frac{\{s\le t\wedge m_n,\,s>e\}}{\{F(s)-F(s-e)\}^2}\,d\Q_n(e,s)\nonumber\\
&\qquad\qquad\qquad\qquad\qquad\qquad+\int_{s-e\le t,\,s>m_n}\frac{\{s-e\le t,\,s>e\vee m_n\}}{\{1-F(s-e)\}^2}\,d\Q_n(e,s),
\end{align}
where $F$ is the temporary estimate of the distribution function at an iteration. The MLE $\hat F_n$ corresponds to a stationary point of this algorithm and is given by the left-continuous slope of the greatest convex minorant of the cusum diagram, see Figure \ref{figure:cusum}. See \cite{piet:21} for further  remarks on this algorithm. The algorithm is implemented in the {\tt R} scripts in \cite{github:20}.

\begin{figure}[!ht]
\centering
\includegraphics[width=0.5\textwidth]{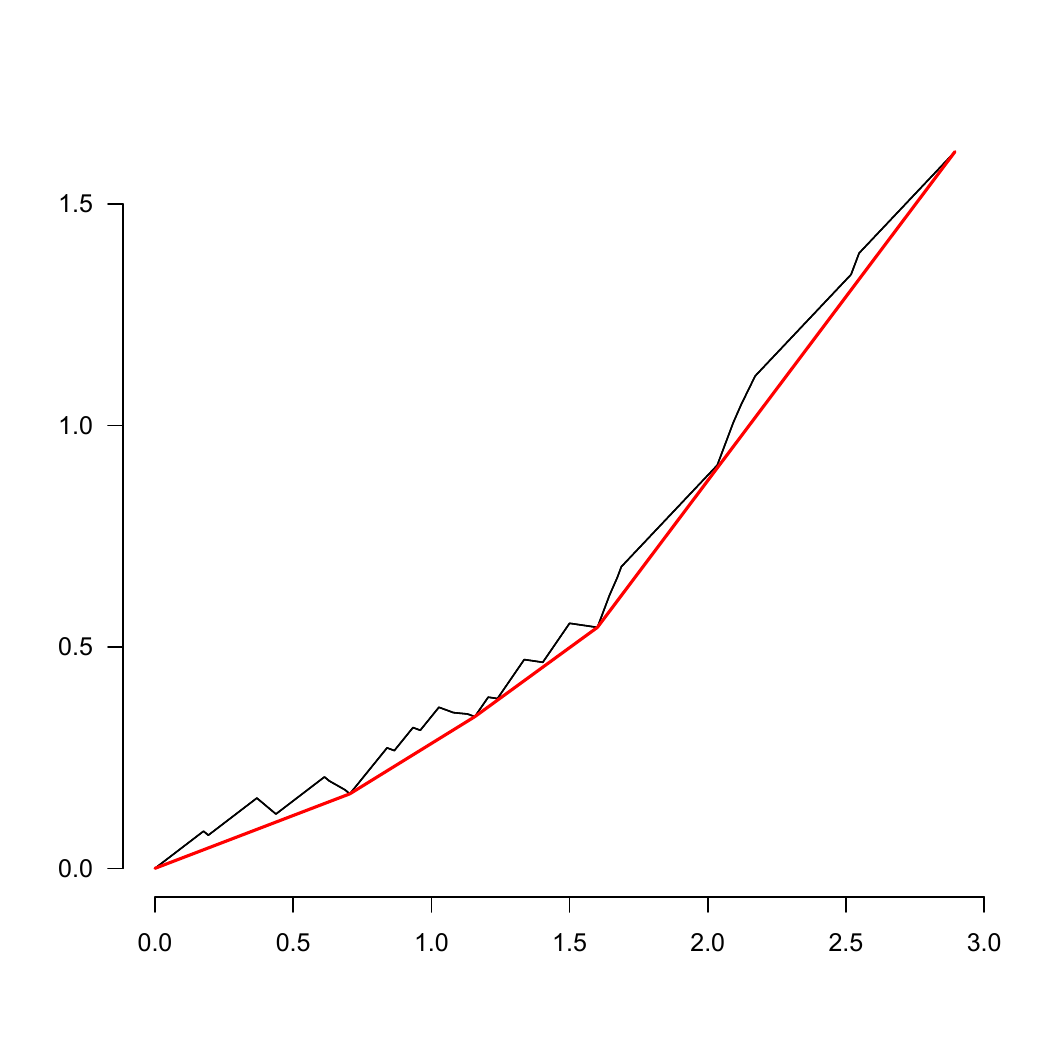}
\caption{The cusum diagram $\{(G_n(T_i),V_n(T_i)),\,i=1,2,\dots\}$, where $T_i$ runs through the ordered points $(S_i-E_i)_+$ and $S_i$ and $V_n$ is defined by (\ref{def_V_n}), together with its greatest convex minorant (red curve). Sample size $n=100$.}
\label{figure:cusum}
\end{figure}

The steps in the iterative algorithm are modified by a line search algorithm, see section 7.3 of \cite{piet_geurt:14} and in particular the description of this modified iterative convex minorant algorithm on p.\ 173. The modified iterative convex minorant algorithm is guaranteed to converge by Theorem 7.3 of \cite{piet_geurt:14}, see also \cite{Jon:98}. The modified version is used in the {\tt R} scripts acccompanying this paper \cite{github:20}.

\begin{example}
\label{example_Wuhan}
{\rm
The first time the methods just described were used  for the estimation of the incubation time distribution for Covid-19 was in the analysis of data on 88 travelers from Wuhan  in \cite{piet:21}. The data set is included in \cite{piet:21} and extracted from the supplementary material of \cite{backer:20}. In this case we have:
\begin{align*}
\f_n(\bm y)=\sum_{0<j\le m}N_{0i}\log y_j+\sum_{0<i<j\le m}N_{ij}\log(y_j-y_i)+\sum_{i\le m}N_{i,m+1}\log(1-y_i),
\end{align*}
where $m=6$, $\f_n$ is defined by (\ref{emp_loglik}), and the matrix $(N_{ij})$, $0\le i<j\le m+1$ is given by:
\begin{align*}
\begin{array}{llllllll}
&1 \qquad &3  \qquad &4 \qquad  &0 \qquad  &0  \qquad &2 \qquad  &0 \\  
&  &2    &1   &0   &0   &0   &9\\   
&&&0   &1   &1   &0   &4\\   
&&&& 1    &0   &2   &3  \\
&&&&&1     &0   &6 \\ 
&&&&&& 1     &3   \\
&&&&&&& 3 
\end{array}
\end{align*}
The corresponding points $T_1,\dots,T_6$ are given by $T_i=i+2$ and the MLE is given in Figure \ref{figure:MLE_Wuhan_data}. 
For  comparison the maximum likelihood estimator, assuming that the incubation time distribution is a Weibull distribution, is also given in this picture. The nonparametric MLE $\hat F_n$ was denoted by $\hat G_n$ in \cite{piet:21}.

The result can be reproduced by running the {\tt R} script {\tt analysis{\_}ICM.R} in \cite{github:20}.

\begin{figure}[!ht]
\centering
\includegraphics[width=0.5\textwidth]{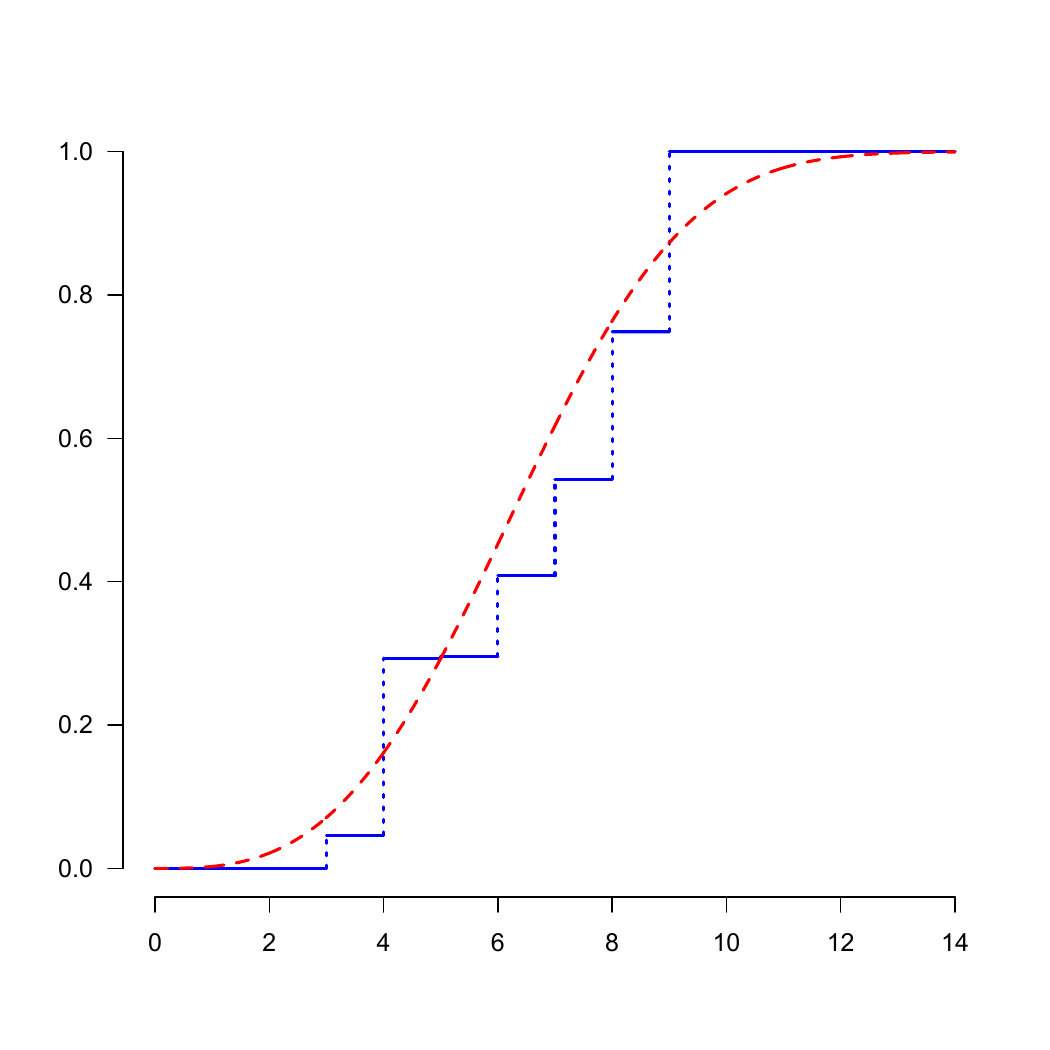}
\caption{The nonparametric MLE $\hat F_n$ of the incubation time distribution function (blue), and the MLE using the Weibull distribution (red, dashed), for the $88$ Wuhan travelers.}
\label{figure:MLE_Wuhan_data}
\end{figure}

}
\end{example}

\section{Consistency of the MLE}
\label{section:consistency}
We have the following result.

\begin{theorem}
\label{th:consistency}
Let the incubation time distribution function $F_0$ have a strictly positive continuous density $f_0$ on $(0,M_1)$, for some $M_1>0$. Furthermore, let $F_E$ be a distribution function on $[0,M_2]$, $M_2>M_1/2$, which is $0$ on $[0,\e]$ for some $\e\in [0,(M_1\wedge M_2)/2)$ and has a continuous strictly positive derivative $f_E$ on $[\e,M_2]$. Let $\hat F_n\in{\cal F}_n$ be the nonparametric MLE, where ${\cal F}_n$ is the set of distribution functions defined in Definition \ref{def_cal_F_n}. Then $\hat F_n$ converges almost surely to $F_0$ on $[0,M_1]$ in the supremum metric.
\end{theorem}
There are a lot of different ways to prove consistency, but we feel a preference for the elegant method in \cite{jewell:82}, which is used in the proof below.

\begin{proof}
We use the same method as in section 4.2 of the second part of the book \cite{GrWe:92}. Let $\psi(F)$ be defined by
\begin{align*}
\psi(F)=\int \log\{F(s)-F(s-e)\}\,d\Q_n(e,s),
\end{align*}
for distribution functions $F$ on $\R$, which satisfy $F(x)=0$, $x\le0$. Then we get:
\begin{align*}
\lim_{\e\downarrow0}\e^{-1}\left\{\psi\left((1-\e)\hat F_n+\e F_0\right)-\psi\left(\hat F_n\right)\right\}\le0,
\end{align*}
since $\hat F_n$ is the MLE. The limit exists because of the concavity of $\psi$. Evaluating this limit, we get:
\begin{align*}
\int\frac{F_0(s)-F_0(s-e)}{\hat F_n(s)-\hat F_n(s-e)}\,d\Q_n(e,s)\le1.
\end{align*}

Proceeding as in section 4.2 of the second part of the book \cite{GrWe:92}, we get from this,
if $F$ is a limit point for $\hat F_n$ (using Helly's compactness theorem):
\begin{align}
\label{limit_ineq}
\int e^{-1}\frac{\{F_0(s)-F_0(s-e)\}^2}{F(s)-F(s-e)}\,ds\,dF_E(e)\le1.
\end{align}
We want to show that the minimum over $F$ on the left is equal to $1$, and that this can only be attained if $F=F_0$.  Note that by Remark \ref{remark_M1}, $\hat F_n(s)=1$, if $s\ge M_1$, and hence also $F(s)=1$, if $s\ge M_1$. This implies
\begin{align}
\label{F_nondegenerate}
\int e^{-1}\{F(s)-F(s-e)\}\,ds\,dF_E(e)=1.
\end{align}
We therefore have:
\begin{align}
\label{criterion_F}
&\int e^{-1}\left\{\frac{\{F_0(s)-F_0(s-e)\}^2}{F(s)-F(s-e)}+\{F(s)-F(s-e)\right\}\,ds\,dF_E(e)\nonumber\\
&=\int_{s\le e} e^{-1}\left\{\frac{F_0(s)^2}{F(s)}+F(s)\right\}\,ds\,dF_E(e)\nonumber\\
&\qquad+\int_{e<s\le M_1} e^{-1}\left\{\frac{\{F_0(s)-F_0(s-e)\}^2}{F(s)-F(s-e)}+\{F(s)-F(s-e)\right\}\,ds\,dF_E(e)\nonumber\\
&\qquad+\int_{s>M_1\vee e} e^{-1}\left\{\frac{\{1-F_0(s-e)\}^2}{1-F(s-e)}+1-F(s-e)\right\}\,ds\,dF_E(e)\nonumber\\
&\le2.
\end{align}

The function
\begin{align*}
(x,y)\mapsto \frac{\{F_0(s)-F_0(s-e)\}^2}{y-x}+y-x
\end{align*}
is minimized if $x=F_0(s-e)$ and $y=F_0(s)$ (also if $F_(s-e)=0$ or $F_0(s)=1$). If $F$ would not be equal to $F_0$, it would be different from $F_0$ on an interval, using the monotonicity of $F$ and $F_0$ and the continuity of $F_0$, and then the left side of (\ref{criterion_F}) would be strictly larger than $2$. Hence, by (\ref{F_nondegenerate}), the left side of (\ref{limit_ineq}) would be strictly larger than $1$, a contradiction.
\end{proof}

\section{Asymptotic distribution of the MLE in the model for the incubation time}
\label{section:limit distribution}

We have the following result for the MLE in the model for the incubation time.

\begin{theorem}
\label{th:local_limit}
Let the conditions of Theorem \ref{th:consistency} be satisfied and let, moreover, $f_E$ have a bounded derivative on the interval $(\e,M_2)$. Let $\hat F_n\in{\cal F}_n$ be the nonparametric MLE, where the set of distribution functions ${\cal F}_n$ is defined in Definition \ref{def_cal_F_n}, and let $F_0$ be the distribution function of the incubation time.  Then we have at a point $t_0\in(0,M_1)$:
\begin{align}
\label{local_limit_result}
n^{1/3}\{\hat F_n(t_0)-F_0(t_0)\}/(4f_0(t_0)/c_E)^{1/3}\stackrel{d}\longrightarrow \text{\rm argmin}\left\{W(t)+t^2\right\},
\end{align}
where $W$ is two-sided Brownian motion on $\R$, originating from zero and where the constant $c_E$ is given by:
\begin{align}
\label{c_E}
c_E=\int e^{-1}\left[\frac1{F_0(t_0)-F_0(t_0-e)}
+\frac1{F_0(t_0+e)-F_0(t_0)}\right]\,dF_E(e),
\end{align}
\end{theorem}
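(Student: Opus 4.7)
The plan is to parallel the classical cube-root argument for Theorem \ref{th:cur_stat_convergence}, applied to the self-induced cusum diagram $(G_n(t), V_n(t))$ of Section \ref{section:char_MLE}. Because $\hat F_n$ is the left-continuous slope of the greatest convex minorant of this diagram, the standard switch relation gives, for each real $a$,
\begin{equation*}
\{\hat F_n(t_0) \ge a\} = \bigl\{\text{\rm argmin}_t \,[V_n(t) - a\, G_n(t)] \le t_0\bigr\}.
\end{equation*}
Setting $a = F_0(t_0) + xn^{-1/3}$ and localising via $t = t_0 + un^{-1/3}$ reduces the theorem to establishing the weak convergence on compacts of
\begin{equation*}
U_n(u) = n^{2/3}\bigl[(V_n - a G_n)(t_0 + un^{-1/3}) - (V_n - a G_n)(t_0)\bigr],
\end{equation*}
followed by a Kim--Pollard type argmin continuous-mapping theorem.

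The second step is to split $U_n$ into a deterministic drift plus a stochastic fluctuation by writing $V_n(t) = \int_0^t \hat F_n\,dG_n + W_{n,\hat F_n}(t)$ and provisionally replacing $\hat F_n$ by $F_0$ in the weights of both $G_{n,\hat F_n}$ and $W_{n,\hat F_n}$. After this replacement, a direct computation using the convolution density $q_{F_0}(e,s) = e^{-1}\{F_0(s)-F_0(s-e)\}$ shows that the derivative at $t_0$ of the $F_0$-expectation of $G_{n,F_0}$ equals
\begin{equation*}
c_E = \int e^{-1}\left[\frac{1}{F_0(t_0)-F_0(t_0-e)} + \frac{1}{F_0(t_0+e)-F_0(t_0)}\right]dF_E(e),
\end{equation*}
while the two integrals constituting $W_{n,F_0}$ have \emph{equal} expected increments on the interior (the factors $1/[F_0(s)-F_0(s-e)]$ cancel against $q_{F_0}$), so that $t\mapsto W_{n,F_0}(t)$ has locally vanishing drift. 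Taylor-expanding $\int_{t_0}^{t_0+un^{-1/3}}(F_0(s)-a)\,dG_{n,F_0}(s)$ and multiplying by $n^{2/3}$ then produces the deterministic parabola $\tfrac12 c_E f_0(t_0) u^2 - x c_E u$, whereas a Donsker-type CLT for the empirical process underlying $W_{n,F_0}$ gives $n^{2/3}\{W_{n,F_0}(t_0+un^{-1/3}) - W_{n,F_0}(t_0)\}\Rightarrow \sqrt{c_E}\,B(u)$, a two-sided Brownian motion scaled by $\sqrt{c_E}$. A Brownian rescaling argument applied to the limiting objective $\sqrt{c_E}B(u) + \tfrac12 c_E f_0(t_0) u^2 - x c_E u$, combined with the symmetry of Chernoff's distribution, extracts the normalising constant $(4 f_0(t_0)/c_E)^{1/3}$, which gives (\ref{local_limit_result}).

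The main obstacle --- explicitly signposted as hard in Section \ref{section:intro} --- is justifying the above replacement of $\hat F_n$ by $F_0$ inside $W_{n,\hat F_n}$ and $G_{n,\hat F_n}$. Uniform consistency (Theorem \ref{th:consistency}) alone is insufficient, because the integrands $1/[\hat F_n(s)-\hat F_n(s-e)]$ are singular when $e$ is small. Following the strategy of \cite{piet:96} for interval censoring case 2, I would use the identity
\begin{equation*}
\frac{1}{\hat F_n(s)-\hat F_n(s-e)} - \frac{1}{F_0(s)-F_0(s-e)} = -\frac{[\hat F_n-F_0](s) - [\hat F_n-F_0](s-e)}{[\hat F_n(s)-\hat F_n(s-e)]\,[F_0(s)-F_0(s-e)]},
\end{equation*}
bound the denominators below via $F_0(s)-F_0(s-e) \ge (\inf f_0)\,e$ away from the boundary together with an analogous lower bound for $\hat F_n$ inherited from Theorem \ref{th:consistency}, and then invoke the smooth-functional theory referenced in Section \ref{section:intro} to solve, or bound the solution of, the linear integral equation whose kernel is the canonical gradient of the smooth functional $F\mapsto F(t_0)$ in this model. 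The hypothesis that $f_E$ is bounded below on $[\e,M_2]$ is precisely what makes that integral equation well-posed, and this is what lets one conclude that the self-induced weights decouple from $\hat F_n$ with remainder $o_p(n^{-2/3})$ at the local $n^{-1/3}$-scale --- exactly what is needed for the argmin continuous-mapping step to deliver (\ref{local_limit_result}).
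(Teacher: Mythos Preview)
Your overall architecture matches the paper's: switch relation on the self-induced cusum diagram, localisation $t=t_0+un^{-1/3}$, identification of the drift $\tfrac12 c_E f_0(t_0)u^2$ and the Brownian fluctuation with variance scale $c_E$, and the final Brownian rescaling. That part is fine.

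The gap is in the hard step, and it is not just a matter of filling in details. You propose to justify the replacement of $\hat F_n$ by $F_0$ in the weights by invoking smooth-functional theory for ``the smooth functional $F\mapsto F(t_0)$''. But $F\mapsto F(t_0)$ is precisely \emph{not} a differentiable functional in this model; that is the content of the theorem you are proving (cube-root rate, Chernoff limit). No canonical gradient exists for it, so there is no integral equation of the kind you describe to solve. Moreover, a blanket replacement $\hat F_n\to F_0$ in the weights of $W_{n,\hat F_n}$ leaves, via your own identity, a contribution of the form
\[
\int_{s\in(t_0,t_0+un^{-1/3}]}\bigl\{\hat F_n(s)-F_0(s)\bigr\}\,\times O(1)\,ds,
\]
which is exactly $O_p(n^{-2/3})$, not $o_p(n^{-2/3})$; consistency alone (or $L_2$ bounds) cannot kill it.

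The paper handles this in two pieces. First (its Lemma~\ref{lemma:decomp}), it does \emph{not} replace the weights; instead it expands $\int(\hat F_n-a)\,dG_n$ together with the $Q_0$-part of $W_{n,\hat F_n}$ and exhibits an algebraic cancellation of a term $A_n(t)$ that carries exactly the ``diagonal'' error $\hat F_n(s)-F_0(s)$ for $s$ near $t_0$. What survives is an ``off-diagonal'' remainder
\[
B_n(t)=\int e^{-1}\!\!\int_{s\in[t_0,t_0+n^{-1/3}t)}\!\!\left\{\frac{\hat F_n(s-e)-F_0(s-e)}{\hat F_n(s)-\hat F_n(s-e)}+\frac{\hat F_n(s+e)-F_0(s+e)}{\hat F_n(s+e)-\hat F_n(s)}\right\}ds\,dF_E(e),
\]
which depends on $\hat F_n-F_0$ only at points $s\pm e$ bounded away from $t_0$. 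Second (its Lemma~\ref{lemma:B_n(t)}), for each fixed $s$ the inner $e$-integral \emph{is} a smooth functional of $F$; the paper solves the adjoint equation with right-hand side $g_s(v)$ built from $f_E(|s-v|)$ to show this inner integral is $O_p(n^{-1/2})$ uniformly in $s$, whence $B_n(t)=O_p(n^{-5/6})$. The integral equation the paper solves is for this auxiliary functional, not for $F\mapsto F(t_0)$. Without the $A_n$-cancellation and the correct choice of functional for the smooth-functional argument, your scheme does not close.
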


The result shows that the limit distribution is given by Chernoff's distribution. 
The jump in difficulty of the proof in going from the corresponding result for the current status model (not discussed in this paper) to more general cases of  interval censoring models and to the model for the incubation time distribution is considerable. One expects in fact that  Chernoff's distribution will often occur as (a universal) limit distribution in these contexts, but proving this might be very hard.

The proof of Theorem \ref{th:local_limit} is given in the Appendix, section \ref{subsection:proof_theorem4}. For the interval censoring, case 2, model the limit distribution of the MLE, under the so-called strict separation condition, was derived in \cite{piet:96}. The strict separation condition in the interval censoring, case 2, model seems somewhat comparable to the condition that $E_i$ has no mass on an interval $[0,\e]$ in the present model. That the exposure time (as observed!) has a strictly positive lower bound does not seem such an unreasonable assumption.

At several places of the proof in section \ref{subsection:proof_theorem4} of the appendix, we use arguments improving on the arguments in \cite{piet:96}, and the proof of the limit result for the interval censoring model in \cite{piet:96} could be improved similarly. But we will not go into these matters in this paper.

It is of interest to investigate whether the nonparametric MLE has sample variances that resemble the asymptotic variance of Theorem \ref{th:local_limit}. To this end we computed the variances over $1000$ simulations of the MLE $\hat F_n(t)$ at $t=6$, if the underlying distribution function is the truncated Weibull distribution function $F_{\a,\b}$, defined by
\begin{align}
\label{weibull_df}
F_{\a,\b}(x)\stackrel{\text{def}}=\left\{\begin{array}{ll}
0 &,\,x<0,\\
\left\{1-\exp(-\b x^{-\a})\right\}/\left\{1-\exp(-M_1 x^{-\a})\right\}\,\qquad &,\,x\in[0,M_1],\\
1 &,\,x>M_1,
\end{array}
\right.
\end{align}
where we choose $M_1=20$. We chose $\a=3.03514$ and $\b=0.0026195$, which were the values of the estimates in the data on the Wuhan travelers, discussed in \cite{piet:21} and section \ref{section:bootstrap_incubation_time} of the present paper, if one assumes that $F_0$ is a truncated Weibull distribution. The distribution function $F_E$ of the exposure time was taken to be the uniform distribution on the interval $[1,30]$.

\begin{table}[!ht]
\caption{$n^{2/3}$ times the variances of $\hat F_n(6)$ for $1000$ simulations for the model, where $F_E$ is uniform on $[1,30]$ and $F_0$ is a truncated Weibull distribution on $[0,20]$. The limit value of Theorem \ref{th:local_limit} is denoted. by $\infty$.}
	\centering
	\label{table:simulation1b}
		\begin{tabular}{|c|c|c|c|c|c|c|c|c|}
			\hline
			 $n$ & $100$ & $500$ & $1000$ & $5000$ & $10,000$ & $\infty$\\
			\hline
			\text{$n^{2/3}\cdot$ variance} & $0.38990$ & $0.36071$ & $0.32329$ & $0.28816$  & $0.27188$ & $0.27489$\\
			\hline	
		\end{tabular}
\end{table}

The asymptotic variance, given by Theorem \ref{th:local_limit}, was computed using the software package Mathematica, where the asymptotic variance $\s^2$ of the location of the minimum of $W(t)+t^2,\,t\in\R$, was taken from \cite{piet_jon:01},
see Table 4 of \cite{piet_jon:01}, where the value $\s^2=0.26355964$ is given.

The results suggest a downward trend of the sample variances to the asymptotic value.

\section{Confidence intervals for the distribution function}
\label{section:bootstrap_incubation_time}
We now construct pointwise confidence intervals for the distribution function on the basis of the SMLE (smoothed maximum likelihood estimator), defined by (\ref{SMLE_incubation}).
We have the following result.

\begin{theorem}
\label{th:limit_SMLE}
Let the conditions of Theorem \ref{th:local_limit} be satisfied and let $h_n$ be a bandwidth such that $h_n\sim cn^{-1/5}$, as $n\to\infty$, for some $c>0$. Moreover, let the density $f_0$ be differentiable at $t\in(0,M_1)$, and let $K_h$ be defined by
\begin{align}
\label{K_h}
K_h(u)=h^{-1}K(u/h).
\end{align}
for the symmetric kernel $K$ which is the derivative of $\IK$. Finally, let the SMLE $\tilde{F}_{nh}$ be defined by (\ref{SMLE_incubation}). Then
\begin{align*}
n^{2/5}\left\{\tilde{F}_{n,h_n}(t)-F_0(t)\right\}\stackrel{d}\longrightarrow N(\mu,\s^2),
\end{align*}
where
\begin{align}
\label{mu_SMLE}
\mu=\tfrac12c^2f_0'(t)\int u^2K(u)\,du.
\end{align}
Moreover, if $Q_0$ is the probability measure of $(E_i,S_i)$,
\begin{align}
\label{sigma_SMLE}
\s^2=\lim_{n\to\infty}n^{-1/5}\|\th_{n,t,F_0}\|^2_{Q_0}=\lim_{n\to\infty}n^{-1/5}\int \th_{n,t,F_0}(e,s)^2\,dQ_0(e,s),
\end{align}
where
\begin{align*}
&\th_{n,t,F_0}(s,e)=\frac{\f_{n,t,F_0}(s)-\f_{n,t,F_0}(s-e)}{F_0(s)-F_0(s-e)},
\end{align*}
%\begin{align}
%\label{sigma_SMLE}
%\s^2=\lim_{n\to\infty} n^{-1/5}\int\f_{n,t,F_0}(y)K_{h_n}(t-y)\,dy.
%\end{align}
and the function $\f_{n,t,F_0}$ solves the integral equation
\begin{align}
\label{adjoint-SMLE}
&\int_{e>0}e^{-1}\left[\frac{\f(x+e)-\f(x)}{F_0(x+e)-F_0(x)}-\frac{\f(x)-\f(x-e)}{F_0(x)-F_0(x-e)}\right]\,dF_E(e)
=-K_{h_n}(t-x),\,\, x\in(0,M_1).
\end{align}
\end{theorem}

\vspace{0.5cm}
The proof is given in Section \ref{subsection:proof_Theorem5.1}.
We here give an outline of the proof. We consider:
\begin{align}
\label{adjoint_SMLE}
\int \IK_h(t-u)\,d\bigl(\hat F_n-F_0\bigr)(u)
&=-\int \th_{\hat F_n}(e,s)\,dQ_0(e,s),
\end{align}
where
\begin{align*}
&\th_{\hat F_n}(e,s)=\frac{\f_{\hat F_n}(s)-\f_{\hat F_n}(s-e)}{\hat F_n(s)-\hat F_n(s-e)},
\end{align*}
and where $\f_{\hat F_n}$ solves the integral equation 
\begin{align}
\label{adjoint-SMLE2}
\int_{e>0}e^{-1}\left[\frac{\f(v+e)-\f(v)}{\hat F_n(v+e)-\hat F_n(v)}-\frac{\f(v)-\f(v-e)}{\hat F_n(v)-\hat F_n(v-e)}\right]\,dF_E(e)=
-K_{h_n}(t-v), \quad v\in(0,M_1),
\end{align}
replacing $F_0$ by $\hat F_n$ in (\ref{adjoint-SMLE}).
Note that $\f_{\hat F_n}$ has both discrete and absolutely continuous parts. We do not have an explicit expression for $\f_{\hat F_n}$ or $\f_{n,t,F_0}$, but can compute it numerically, see \cite{github:20}.

Let $\Q_n$ the empirical measure of $(E_1,S_1),\dots,(E_n,S_n)$. 
The proof of the result can then be continued by proving
\begin{align*}
&-\int \th_{\hat F_n}(e,s)\,dQ_0\\
&=\int \th_{\hat F_n}(e,s)\,d\bigl(Q_n-Q_0\bigr)+o_p\left(n^{-2/5}\right)\\
&=\int \th_{n,t,F_0}(e,s)\,d\bigl(Q_n-Q_0\bigr)+o_p\left(n^{-2/5}\right),
\end{align*}
where we have a representation of $\int \IK_h(t-u)\,d\bigl(\hat F_n-F_0\bigr)(u)$ in terms of an integral in the observation space  $\int \th_{n,t,F_0}\,d(\Q_n-Q_0)$ in the last line, see Section \ref{subsection:proof_Theorem5.1}.

Using Theorem \ref{th:limit_SMLE}, we can construct confidence intervals, using the bootstrap, where we keep the exposure times $E_i$ fixed. Our bootstrap sample consists of:
\begin{align*}
(E_1,S_1^*),\dots,(E_n,S_n^*),
\end{align*}
where
\begin{align}
\label{bootstrap_S_i^*}
S_i^*=U_i^*+V_i^*,
\end{align}
and where $U_i^*$ is uniform on $[0,E_i]$ and $V_i^*$ is generated from the SMLE of the incubation time with a bandwidth $h_0$ of order $n^{-1/9}$. The oversmoothing is used to deal with the bias (see below) and was introduced in \cite{marron:91} in the context of nonparametric regression analysis.

The random variables $V_i^*$ are computed by generating Uniform$(0,1)$ random variables $W_i$ and solving the equation
\begin{align}
\label{golden-section}
\tilde F_{nh_0}(x)=W_i,
\end{align}
in $x$, using golden-section search, and taking $V_i^*=x$ for such $x$, solving (\ref{golden-section}).  See the code {\tt bootstrap{\_}SMLE.cpp} in \cite{github:20}, which is used in the corresponding {\tt R} script {\tt bootstrap{\_}SMLE.R}.

The $95\%$ bootstrap confidence intervals are given by
\begin{align}
	\label{first_conf_intervals}
	\left(\tilde F_{nh}(t)-Q_{0.975}^*,\tilde F_{nh}(t)-Q_{0.025}^*\right),
\end{align}
where $Q_{0.025}^*$ and $Q_{0.975}^*$ are the $2.5$th and $97.5$th percentiles of the values of
\begin{align*}
\tilde F_{nh}^*(t)-\tilde F_{nh_0}(t)
\end{align*}
for $1000$ (bootstrap) samples of (\ref{bootstrap_S_i^*}), and where $\tilde F_{nh}^*$ is the SMLE with bandwidth $h$, corresponding to the MLE $\hat F_n^*$ computed for a bootstrap sample of size $n$.

\begin{figure}[!ht]
\includegraphics[width=0.5\textwidth]{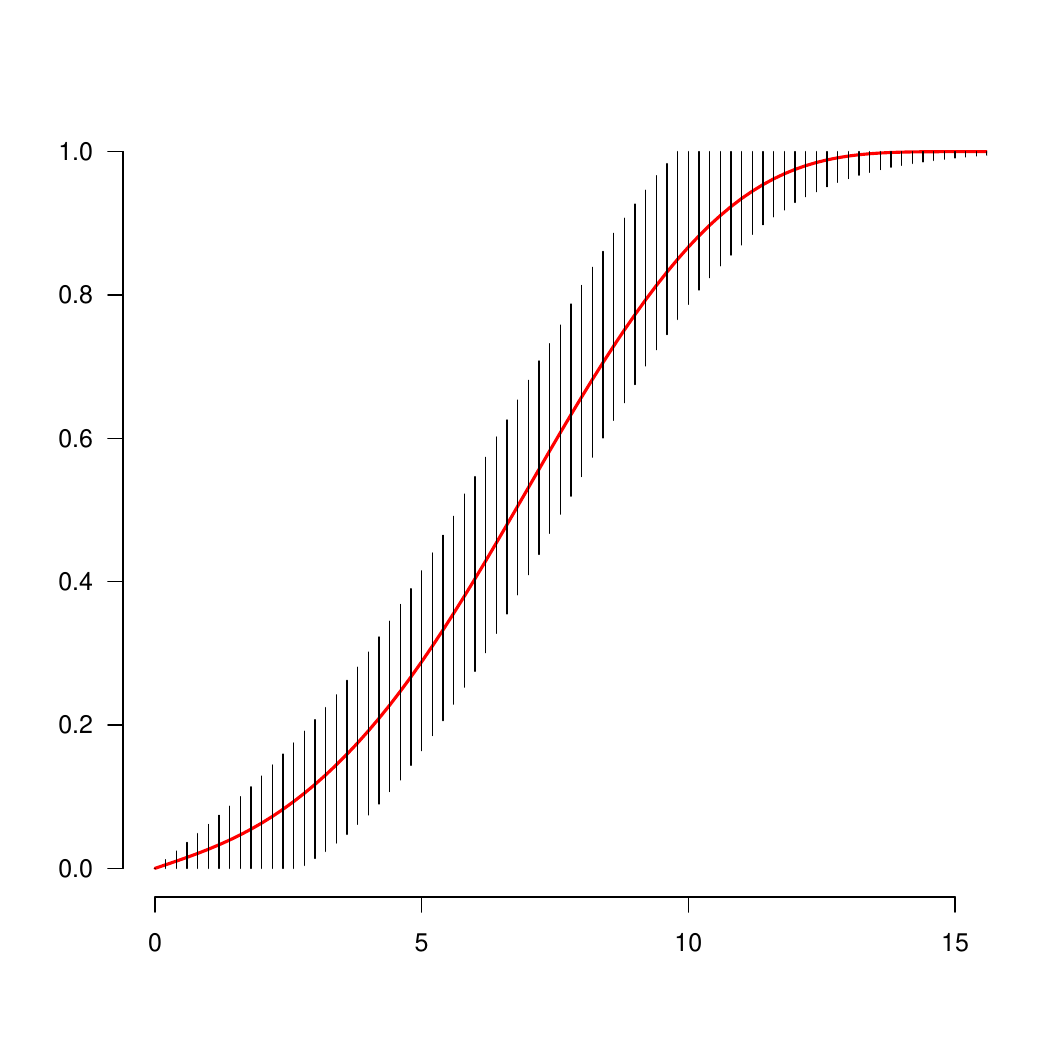}
\caption{Smoothed bootstrap onfidence intervals for the Wuhan data set, based on the SMLE. The red curve is the SMLE $\tilde F_{n,h}$, where $h=6.21628$.}
\label{figure:CI_incubation_df}
\end{figure}

\begin{example}
{\rm
We consider again the data on the Wuhan travelers, considered in Example \ref{example_Wuhan}.
For Figure \ref{figure:CI_incubation_df} all $1000$ values $\tilde F_{nh}^*(t)-\tilde F_{n, h_0}(t)$, and the percentiles $Q^*_{0.025}(t)$ and $Q^*_{0.975}(t)$ were determined. This gives the bootstrap intervals (\ref{first_conf_intervals}).
Figure \ref{figure:CI_incubation_df} can be reproduced by running the {\tt R} script {\tt bootstrap{\_}SMLE.R} in \cite{github:20}.
}
\end{example}

The following result shows that we can expect the SMLE's computed on the basis of the bootstrap samples to behave asymptotically in the same  way as the original SMLE's.

\begin{theorem}
\label{th:bootstrap_SLSE}
Let the conditions of Theorem \ref{th:limit_SMLE} be satisfied. Moreover, let $h\sim cn^{-1/5}$ and $h_0\sim c_0n^{-1/9}$, for some positive constants $c$ and $c_0$. Then, at $t\in(0,M_1)$,
\begin{align*}
n^{2/5}\left\{\tilde F_{nh}^*(t)-\tilde F_{nh_0}(t)\right\} \stackrel{\cal D}\longrightarrow N(\mu,\s^2), 
\end{align*}
given $(E_1,S_1),\dots,(E_n,S_n)$, almost surely along sequences $(E_1,S_1),(E_2,S_2),\dots$, where $\mu$ and $\s^2$ are defined by (\ref{mu_SMLE}) and (\ref{sigma_SMLE}).
\end{theorem}

The proof of this result follows the lines of the proof of Theorem \ref{th:limit_SMLE} in Section \ref{subsection:proof_Theorem5.1}, apart from the fact that in this case the distribution function generating the (bootstrap) incubation times is $\tilde F_{nh_0}$ (instead of $F_0$), which converges almost surely to $F_0$ and that therefore Lindeberg-Feller conditions are used for the central limit theorem because of the varying underlying measure. This matter is discussed in the context of a nonparametric regression model in \cite{piet_geurt:23}.

Note that the centering constant $\m$ arises from the difference
\begin{align*}
\int K_h(t-u)\,\tilde F_{nh_0}(u)\,du-\tilde F_{nh_0}(t)\sim\tfrac12h^2\tilde F_{nh_0}''(t)\sim\tfrac12h^2F_0''(t), 
\end{align*}
and that we need here that $h_0$ tends to zero slower than $n^{-1/5}$, see section \ref{subsection:bandwidth_choice} below.

\subsection{Bandwidth selection}
\label{subsection:bandwidth_choice}
We propose a bootstrap method to find an approximately MSE optimal bandwidth for estimating $F_0(t)$ at a point $t\in(0,M_1)$. The MSE we want to minimize as a function of $h$ is given by:
\begin{align}
\label{MSE1}
MSE_h(t)=\E \bigl\{\tilde{F}_{nh}(t)-F_0(t)\bigr\}^2.
\end{align}
The analogous bootstrap quantity (using oversmoothing, in the sense that $h_0\asymp n^{-1/9}$) is given by:
\begin{align}
\label{MSE_bootstrap}
MSE_h^*(t)=\E\left\{\left\{\tilde{F}_{nh}^*(t)-\tilde F_{nh_0}(t)\right\}^2\Bigm|(E_1,S_1),\dots,(E_n,S_n)\right\},
\end{align}
where $h_0$ is called a ``pilot'' bandwidth. We shall show that (\ref{MSE_bootstrap}) is asymptotically independent of the constant $c_0$ in the pilot bandwidth $h_0$ if we take $h_0=c_0n^{-1/9}$.

Using integration by parts, we can write
\begin{align*}
\tilde{F}_{nh}^*(t)-\tilde F_{nh_0}(t)&=\int K_h(t-x)\,\bigl\{\hat F_n^*(x)-\tilde F_{nh_0}(x)\bigr\}\,dx\\
&\qquad+\int K_h(t-x)\,\tilde F_{nh_0}(x)\,dx-\tilde F_{nh_0}(t).
\end{align*}

We have (MSE is variance $+$ squared bias):
\begin{align}
\label{variance_bias_decomp}
	MSE_h^*(t)&\sim\E\left\{\left\{\int K_h(t-x)\,\left\{\hat F_n^*(x)-\tilde F_{nh_0}(x)\right\}\,dx\right\}^2\Bigm|(E_1,S_1),\dots,(E_n,S_n)\right\}\nonumber\\
&\qquad\qquad	+\left\{\int K_h(t-x)\, \tilde F_{nh_0}(x)\,dx- \tilde F_{nh_0}(t)\right\}^2.
\end{align}
For the second term on the right we get:
\begin{align*}
\int K_h(t-x)\,\tilde F_{nh_0}(x)\,dx-\tilde F_{nh_0}(t)
=\tfrac12h^2\tilde F_{nh_0}''(t)\int u^2K(u)\,du+o_p\left(h^2\right),
\end{align*}
so
\begin{align*}
&\E\left\{\left\{\int K_h(t-x)\, \tilde F_{nh_0}(x)\,dx- \tilde F_{nh_0}(t)\right\}^2\Bigm|(E_1,S_1),\dots,(E_n,S_n)\right\}\\
&=\tfrac14h^4\tilde F_{nh_0}''(t)^2\left\{\int u^2K(u)\,du\right\}^2+o_p\left(h^4\right).
\end{align*}

We have the following result.

\begin{lemma}
\label{lemma:bias_term}
Let the conditions of Theorem \ref{th:limit_SMLE} be satisfied. Moreover, let $h_0=h_{n,0}\sim c_0n^{-1/9}$, as $n\to\infty$. Then
\begin{align*}
&\tilde F_{nh_0}''(t)\stackrel{p}\longrightarrow f_0'(t),\qquad n\to\infty.
\end{align*}
\end{lemma}

\begin{figure}[!ht]
\includegraphics[width=0.5\textwidth]{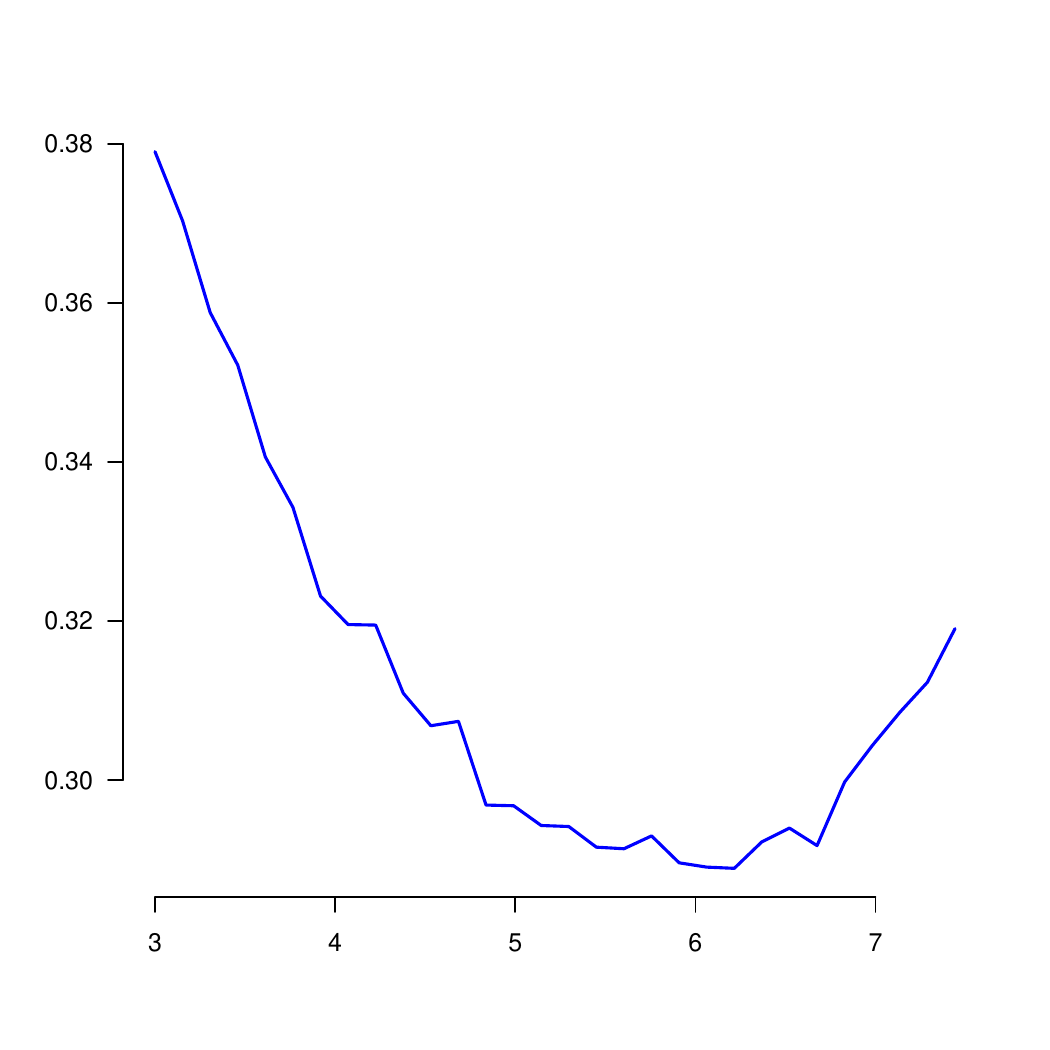}
\caption{The bootstrap MSE for the Wuhan data.}
\label{figure:MSE}
\end{figure}

For reasons of space we omit the proof here. More details on this type of lemma can be found in \cite{piet_geurt:23} in the context of monotone regression.

\begin{remark}
{\rm
Note that this convergence result does not hold if the pilot bandwidth $h_0$ is of order $n^{-1/5}$. For this reason the method suggested in \cite{SenXu2015}, where the pilot bandwidth is chosen of order $n^{-1/5}$  will not work, since in that case the variance of $\tilde F_{nh_0}''(t)$ will not tend to zero. Another way out is to use subsampling, as used in
\cite{kim_piet:18:SJS}, but choosing the right subsample size is a rather hard problem.

Since the first order behavior of the first term of (\ref{variance_bias_decomp}) also only depends on $h$ and $n$ and not on $h_0$, the dependence on the constant $c_0$ in the pilot bandwidth disappears in first order, as $n\to\infty$, which indicates robustness of this bandwidth choice procedure. In fact, $n^{4/5}$ times the first term of (\ref{variance_bias_decomp}) tends to the limit variance (\ref{sigma_SMLE}) and $n^{4/5}$ times the second term of (\ref{variance_bias_decomp}) tends to the squared bias $\mu^2$, where $\m$ is defined by (\ref{mu_SMLE}), irrespective of the constant $c_0$ in the pilot bandwidth $h_0=c_0n^{-1/9}$.
}
\end{remark}

Instead of minimizing (\ref{MSE_bootstrap})  we minimize a  Monte Carlo approximation of a modified version of (\ref{MSE_bootstrap}):
\begin{align}
\label{bootstrap_L2-distance}
B^{-1}\sum_{i=1}^B \sum_{j=1}^m\left\{\tilde F^{*,i}_{nh}(t_j)-\tilde F_{nh_0}(t_j)\right\}^2,
\end{align}
where the $\tilde F^{*,i}_{nh}(t_j)$, $i=1,\dots,B$, $t_j,\dots,m$ are the estimates in $B$ bootstrap samples on a grid of equidstant points $t_j$. The script for this procedure is given by the {\tt R} script {\tt bandwidth{\_}choice{\_}df.R} on \cite{github:20}. This script produced the bandwidth in Figure \ref{figure:CI_incubation_df}. We took $B=10,000$ in this case.

A picture of the bootstrap MSE (\ref{bootstrap_L2-distance}) as a function of $h$ for the data on the travelers from Wuhan is shown in Figure \ref{figure:MSE}.

\section{Estimation of quantiles and comparison with parametric methods}
\label{section:comparison}
We now illustrate the difference between the nonparametric approach and the approach using distributions like the Weibull, log-normal, etc.\ for the incubation time distribution.  This is shown for the problem of estimating the 95th percentile of the distribution. To this end we generated $1000$ samples of size $n=500$ and also size $n=1000$, using the same Weibull distribution to generate the incubation time distribution as we used in Section \ref{section:bootstrap_incubation_time} for constructing confidence intervals. This example is also given (for sample size $n=500$) in \cite{pietNAW:21}.

In the (truncated) Weibull approach to the problem, we maximize for $\a,\b>0$:
\begin{align}
\label{log_l_weibull}
\sum_{i=1}^n \log\left\{F_{\a,\b}(S_i)-F_{\a,\b}(S_i-E_i)\right\},
\end{align}
where $F_{\a,\b}$ is defined by (\ref{weibull_df}).
 This gives a maximum likelihood estimate $F_{\hat\a,\hat\b}$ of the distribution function, where $(\hat\a,\hat\b)$ maximizes (\ref{log_l_weibull}) over $(\a,\b)$.  The estimate of the 95th percentile is then defined by $F^{-1}_{\hat\a,\hat\b}(0.95)$, where $F^{-1}_{\hat\a,\hat\b}$ denotes the inverse function.

In the log-normal approach to the problem, we maximize for $\a\in\R$ and $\b>0$:
\begin{align}
\label{log_l_lognormal}
\sum_{i=1}^n \log\left\{G_{\a,\b}(S_i)-G_{\a,\b}(S_i-E_i)\right\},
\end{align}
where $G_{\a,\b}$ is defined by 
\begin{align}
\label{log-normal_df}
G_{\a,\b}(x)=\Phi\left((\log x - \a)/\b\right),
\end{align}
for $x> 0$ (zero otherwise), where $\b>0$ and $\Phi$ is the standard normal distribution function.
The estimate of the percentile is then given by $G^{-1}_{\hat\a,\hat\b}(0.95)$, where $(\hat\a,\hat\b)$ maximizes (\ref{log_l_lognormal}) over $(\a,\b)$.

In the {\it nonparametric} maximum likelihood approach we simply maximize 
\begin{align*}
\sum_{i=1}^n \log\left\{F(S_i)-F(S_i-E_i)\right\},
\end{align*}
over {\it all} distribution functions $F$. This give the nonparametric MLE $\hat F_n$, from which we compute the SMLE $\tilde F_{n,h_n}(t)=\int \IK_h(t-y)\,d\hat F_n(y)$ and the estimate of the 95th percentile $\tilde F^{-1}_{n,h_n}(0.95)$. The bandwidth $h$ was chosen to be $h_n=6n^{-1/5}$ here. Note that, using the delta method, we find:
\begin{align*}
&n^{2/5}\left\{\tilde F_{n,h_n}^{-1}(0.95)-F_{\a,\b}^{-1}(0.95)\right\}\\
&=-n^{2/5}\left\{\tilde F_{n,h_n}(F_{\a,\b}^{-1}(0.95))-0.95)\right\}\bigm/f_{\a,\b}\left(F_0^{-1}(0.95)\right)+o_p(1),
\end{align*}
where $f_{\a,\b}$ is the (truncated) Weibull density, corresponding to the distribution function $F_{\a,\b}$, defined by (\ref{weibull_df}).

The results of this simulation for $1000$ samples of size $n=500$ are shown in the box plot Figure \ref{figure:boxplot} and the corresponding picture for sample size $n=1000$ in Figure \ref{figure:boxplot2}.

\begin{figure}[!ht]
\begin{center}
\includegraphics[width=0.55\textwidth]{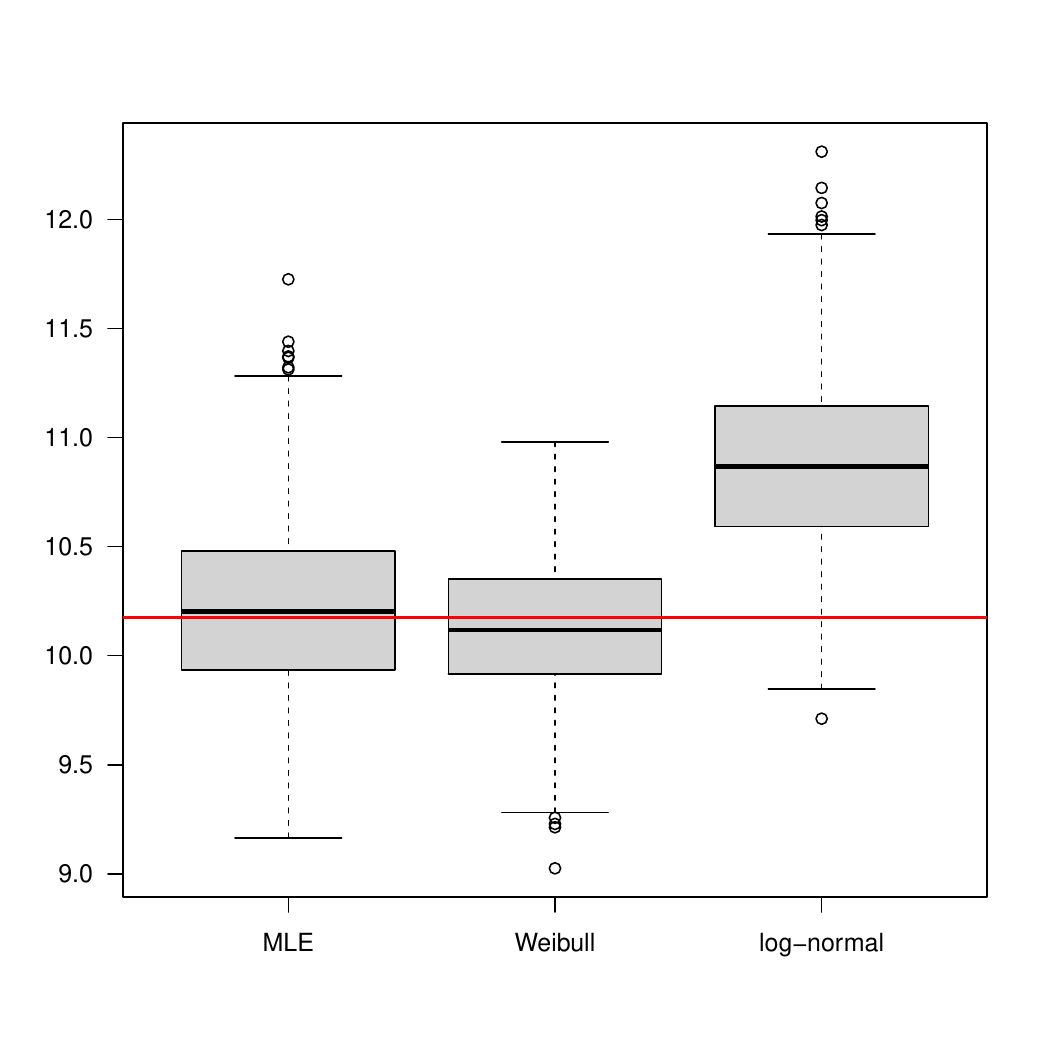}
\captionof{figure}{Box plot of 95th percentile estimates for the nonparametric, Weibull and log-normal maximum likelihood estimators for $1000$ samples of size $n=500$. The incubation time data are generated from a Weibull distribution. The red line denotes the value of the true percentile.}
\label{figure:boxplot}
\end{center}
\end{figure}

\begin{figure}[!ht]
\begin{center}
\includegraphics[width=0.55\textwidth]{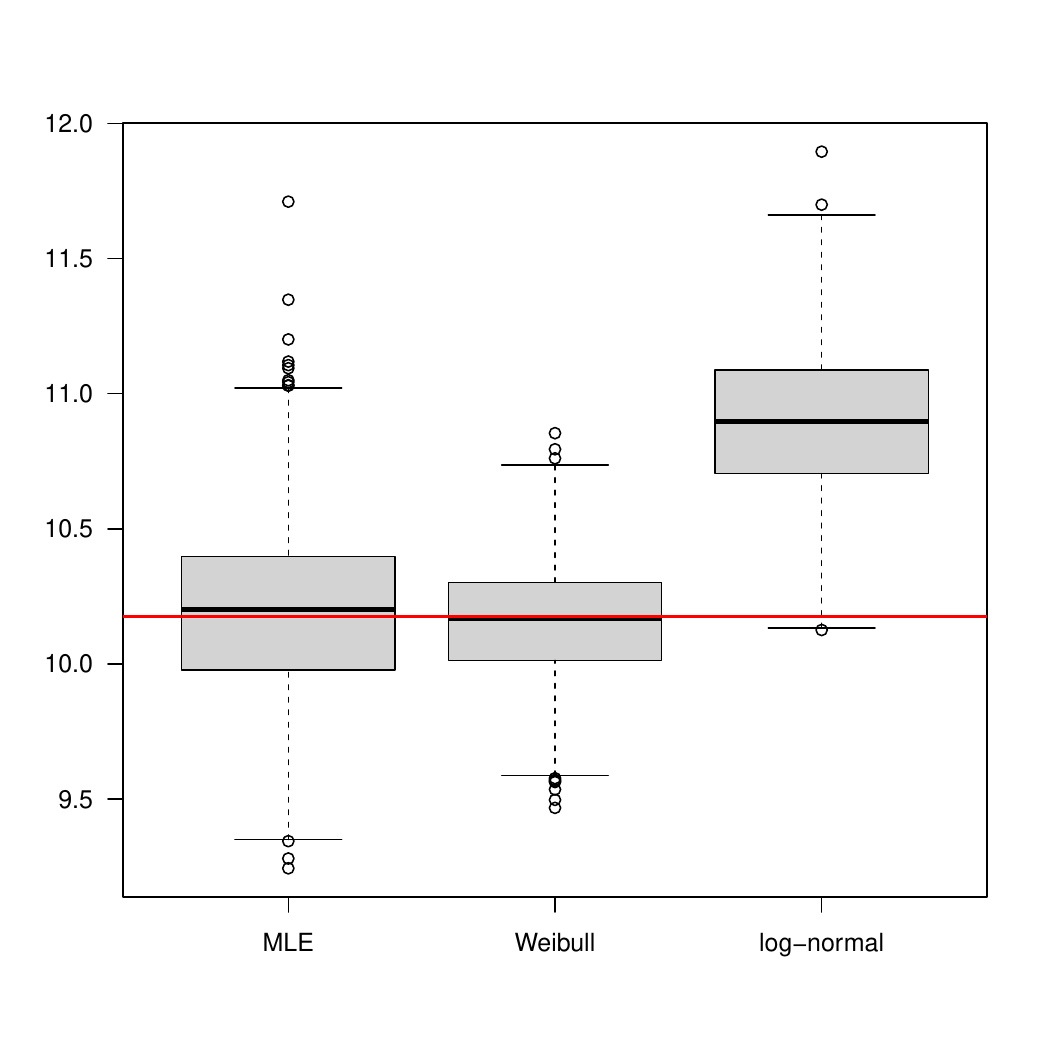}
\captionof{figure}{Box plot of 95th percentile estimates for the nonparametric, Weibull and log-normal maximum likelihood estimators for $1000$ samples of size $n=1000$. The incubation time data are generated from a Weibull distribution. The red line denotes the value of the true percentile.}
\label{figure:boxplot2}
\end{center}
\end{figure}

The black line segments in the boxes are at the position of the median. Finally, the red line denotes the value of $F_{\a,\b}^{-1}(0.95)\approx10.17716$, where $(\a,\b)$ are the parameters of the Weibull distribution. {\tt R} scripts for all methods are given in the directory ``simulations'' of \cite{github:20}.

It can be seen that, since the incubation time data were generated from a Weibull distribution, the estimates of the quantiles assuming this distribution have indeed the smallest variation. But the nonparametric estimates, not making the assumption that the distribution is of the Weibull type, are also pretty good, whereas the  estimates, assuming a log-normal distribution are completely off (in fact, these estimate are inconsistent). The SMLE adapts to the underlying distribution and provides consistent estimates, using the consistency of the MLE itself, derived in Section \ref{section:consistency} and the consistency of the SMLE, which can be deduced from this.

One sees that the uncertainty about what interest us, is not much larger when one only uses the nonparametric SMLE than when one assumes that the incubation time distribution is Weibull (which is the correct distribution in this simulation setting), but much larger when one assumes log-normal. While there is absolutely no scientific (medical) reason to “believe” Weibull, or to “believe” log-normal. They lead to completely different statistical inferences, hence could lead to completely different policy recommendations.

\section{Other smooth functionals}
\label{section:smooth_functionals}
The first moment is the prototype of a smooth functional, The asymptotic normality and $\sqrt{n}$ convergence  of the estimate
\begin{align*}
\int x\,d\hat F_n(x)
\end{align*}
where $\hat F_n$ is the nonparametric MLE was given for the current status model in \cite{GrWe:92}, Theorem 5.5 of Part 2. The asymptotic variance is given by
\begin{align*}
\s^2=\int\frac{F_0(t)\{1-F_0(t)\}}{g(t)}\,dt,
\end{align*}
where $g$ is the density of the observation times and $F_0$ the distribution function of the hidden estimate.

Similar results for more general cases of interval censoring are given in Chapter 10 of \cite{piet_geurt:14}, but in those cases the expression for the asymptotic varaiance is coming from the solution of an integral equation and no longer explicit as in the case of the current status model. A similar situation holds for the model for the incubation time distribution.

We have the following asymptotic normality result for the estimate of the first moment, based on the nonparametric MLE $\hat F_n$ for the incubation time model, if the support of the incubation time distribution is $[0,M_1]$:
\begin{align}
\label{convergence_mean}
\sqrt{n}\left\{\int x\,d\hat F_n(x)-\int x\,dF_0(x)\right\}\stackrel{{\cal D}}\longrightarrow N(0,\s^2),
\end{align}
where $N(0,\s^2)$ is a normal distribution with mean zero and variance
\begin{align*}
\s^2=-\int_0^{M_1}\f_{F_0}(x)\,dx.
\end{align*}
and $\f_{F_0}$ is also the solution of the following equation in $\f$:
\begin{align}
\label{phi-diff}
\int_{e>0}e^{-1}\left[\frac{\f(v+e)-\f(v)}{F(v+e)-F(v)}-\frac{\f(v)-\f(v-e)}{F(v)-F(v-e)}\right]\,dF_E(e)=1,\qquad v\in[0,M_1].
\end{align}
The distribution function $F_E$ of the exposure time was again chosen to be the uniform distribution function on $[1,30]$.

\begin{figure}[!ht]
\begin{center}
\includegraphics[width=0.6\textwidth]{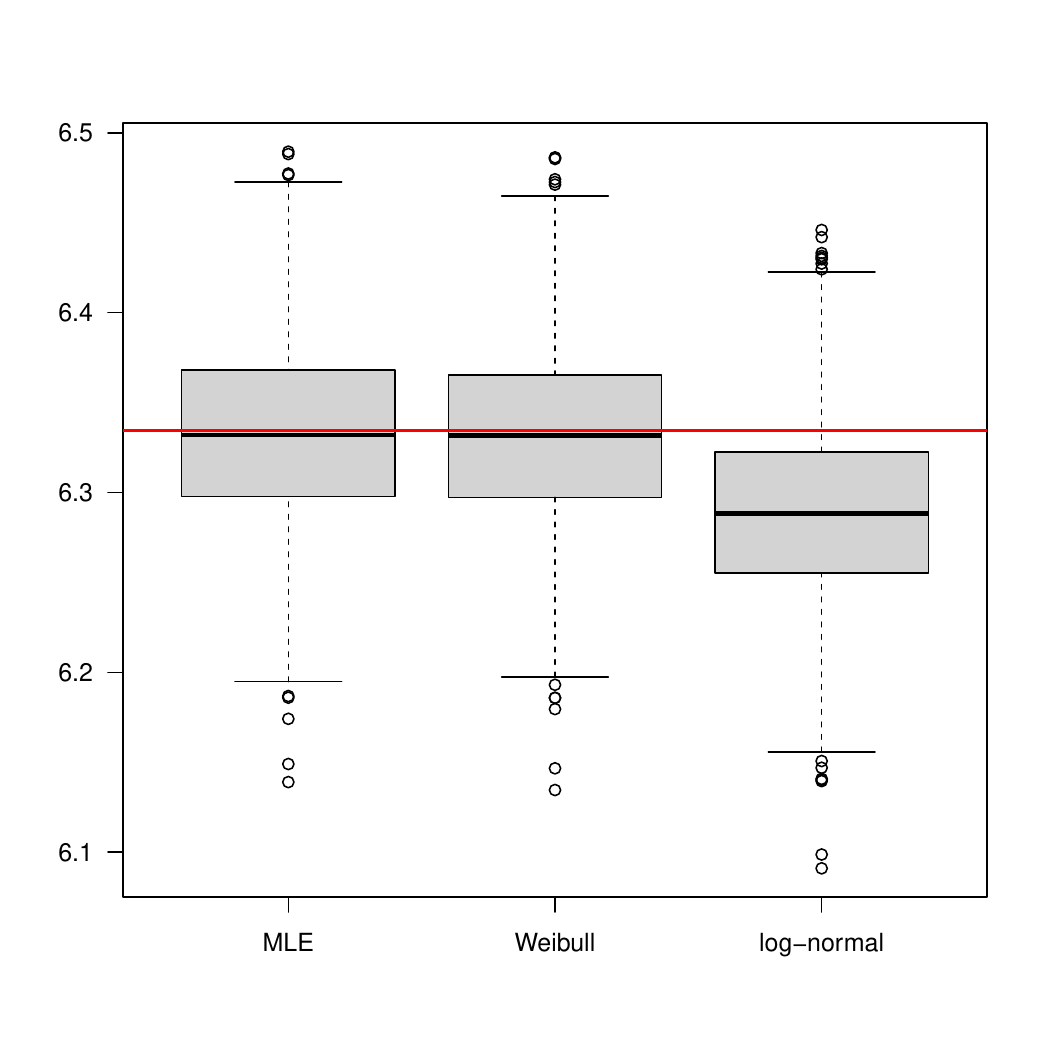}
\captionof{figure}{Box plot of estimation of the first moment of the incubation distribution for the nonparametric, Weibull and log-normal maximum likelihood estimators for $1000$ samples of size $n=5000$. The incubation time data are generated from a Weibull distribution. The red line denotes the value of the actual real first moment.}
\label{figure:boxplot_mean}
\end{center}
\end{figure}

The derivation of this result is given in the Appendix. The inconsistency of the estimate based on the log normal model is again clearly seen from the boxplot Figure \ref{figure:boxplot_mean}. However, if we would have generated the incubation time distribution from a log normal distribution, the estimate based on the Weibull distribution would be inconsistent, so Figure \ref{figure:boxplot_mean} cannot be interpreted as showing the superiority of the Weibull distribution.

Examples of the behavior of the density estimate
\begin{align*}
\hat f_{nh}(t)=\int K_h(t-y)\,d\hat F_n(y),
\end{align*}
which converges at rate $n^{2/7}$, where $h\sim c n^{-1/7}$, $c>0$, are given in \cite{piet:21} and \cite{pietNAW:21}

\section{Conclusion}
\label{section:conclusion} 
We proved that the nonparametric MLE in a model for the incubation time distribution converges in distribution, after standardization, to Chernoff's distribution. The rate of convergence is cube root $n$, if $n$ is the sample size, under a separation condition for the exposure time.  We also discussed (locally) differentiable functionals of the model, estimated by corresponding functionals of the nonparametric MLE, which converge after standardization to a normal distribution at faster rates, where the constants are given by the solution of an integral equation.

This provides an alternative for the parametric models that are usually applied in this context, estimating the incubaton time distribution by, e.g., Weibull, gamma or log-normal distributions.
If the parametric model is not right (there is in fact no scientific or medical reason to choose for Weibull, gamma, Erlang, log-normal. etc., and one sees for this reason usually these distribution applied at the same time) the estimates are inconsistent if the chosen model does not hold, as we demonstrate in Sections \ref{section:comparison} and \ref{section:smooth_functionals}

As shown in Section \ref{section:smooth_functionals}, for parameters like the first moment, we do not have to choose a bandwidth parameter, while the behavior of the estimate based on the nonparametric MLE is competitive to the parametric estimates in this case, even if the model for the parametric estimate is right.

{\tt R} scripts for computing the estimates are given in \cite{github:20}.

\section{Appendix}
\label{section:Appendix}

\subsection{Integral equations}
\label{subsection:Integral_equations}
As explained in the Appendix of \cite{piet:21}, the theory of the estimation of smooth functionals in the model is based on certain integral equations. For the incubation time model an extra indicator $\dd_i$ was introduced to indicate whether $S_i\le E_i$.

But introducing such an indicator is not necessary. So we define the score function $\th$ without these idicators and get as definition for the score function:
\begin{align*}
\th(e,s)=E\left\{a(X)|(E,S)=(e,s)\right\}=\frac{\int a(x)\,dF(x)}{F(s)-F(s-e)}\,, 
\end{align*}
(compare to (A.1) in \cite{piet:21}). This is the conditional expectation of $a(X)$ in the ``hidden'' space of the variable of interest (the incubation time), given our observation $(E,S)$. Note that we changed the notation somewhat w.r.t.\ \cite{piet:21}, and denote the distribution function of the incubation time by $F$ instead of $G$.

Defining
\begin{align*}
\f(t)=\int_{x\in[0,t]}a(x)\,dF(x),
\end{align*}
we get the following representation for the score function, conditioned on $X=x$:
\begin{align}
\label{original_irepresentation}
E\{\th(E,S)|X=x\}
=\int_{e\in[0,M_2]}e^{-1}\left\{\int_{s\in(x,x+e]}\frac{\f(s)-\f(s-e)}{F(s)-F(s-e)}\,ds\right\}\,dF_E(e).
\end{align}
Note that in \cite{piet:21} the distribution function $F_E$ was taken to be uniform, just as an example, but that we do not assume that here.

\vspace{0.4cm}
Differentating (\ref{original_irepresentation}) w.r.t.\ $x$, we get the following integral equation, with on the right the derivative of the functional
 we want to estimate, denoted by $\psi$:
\begin{align}
\label{phi-int}
&\int e^{-1}\left[\frac{\f(x+e)-\f(x)}{F(x+e)-F(x)}-\frac{\f(x)-\f(x-e)}{F(x)-F(x-e)}\right]\,dF_E(e)\nonumber\\
&=-\frac{\f(x)}{1-F(x)}\int_{e\ge M_1-x}e^{-1}\,dF_E(e)-\frac{\f(x)}{F(x)}\int_{e\ge x}e^{-1}\,dF_E(e)\nonumber\\
&\qquad+\int e^{-1}\left[\frac{\f(x+e)-\f(x)}{F(x+e)-F(x)}1_{\{e<M_1-x\}}-\frac{\f(x)-\f(x-e)}{F(x)-F(x-e)}1_{\{e<x\}}\right]\,dF_E(e)\nonumber\\
&=\psi(x),\qquad x\in[0,M_1].
\end{align}
This is called a Fredholm integral equation of the second kind. Here and in the sequel, we assume that distribution functions are right-continuous.

We assume that $F_E$ satisfies the following condition:
\begin{enumerate}
\item[(F1)] $F_E$ is a distribution function on $[0,M_2]$, $M_2>M_1/2$, which is $0$ on $[0,\e]$ for some $\e\in (0,(M_1\wedge M_2)/2)$ and has a continuous strictly positive derivative $f_E$ on $[\e,M_2]$. The distribution function is defined to be $1$ on $[M_2,\infty)$.
\end{enumerate}
We define the class of distribution functions ${\cal F}$ in the following way.
\begin{enumerate}
\item[(F2)] ${\cal F}$ consists of the distribution functions $F$ on $[0,M_1]$ with only a finite number of jumps, contained  in $(0,M_1)$, and satisfying
\begin{align}
\label{separation_condition}
F(u)-F(t)\ge c>0, \qquad\text{ if } u-t\ge \e\quad\text{and}\quad t,u\in[0,M_1].
\end{align}
where $\e>0$ is defined as in Definition (F1). The distribution functions are extended to $[0,\infty)$ by defining it to be equal to $1$ on $[M_1,\infty)$.
\end{enumerate}

Note that the MLE satisfies the conditions of (F2) for sufficiently large $n$, by the conditions of Theorem \ref{th:local_limit} (in particular the fact that the density $f_0$ is strictly positive on $(0,M_1)$) and the consistency of the MLE (Theorem \ref{th:consistency}).

\begin{lemma}
\label{lemma:uniqueness_sol_inteq}
Let $F_E$ be a distribution function on $[0,M_1]$, satisfying condition (F1) and let $F\in{\cal F}$, where ${\cal F}$ is defined in (F2).               
Moreover, let, $F\in{\cal F}$ satisfy:
\begin{align}
\label{side_condition}
\{1-F(t)\}\int_{e\ge t}e^{-1}\,dF_E(e)+F(t)\int_{e\ge M_1-t}e^{-1}\,dF_E(e)\ge c>0
\end{align}
for some $c>0$ and each $t\in[0,M_1]$.

Finally, let $\psi:[0,M_1]\to\R$ be a bounded right-continuous function with left-limits (cadlag) on $[0,M_1]$. Then the equation (\ref{phi-int}) has a unique solution in $\f:[0,M_1]\to\R$.
\end{lemma}

\begin{proof}
Consider, for $t\in[0,M_1]$, the homogeneous equation
\begin{align}
\label{homogeneous_eq}
&\f(t)\left\{\{1-F(t)\}\int_{e\ge t}e^{-1}\,dF_E(e)+F(t)\int_{e\ge M_1-t}e^{-1}\,dF_E(e)\right\}\nonumber\\
&=F(t)\{1-F(t)\}\nonumber\\
&\qquad\qquad\cdot\int e^{-1}\left[\frac{\f(t+e)-\f(t)}{F(t+e)-F(t)}1_{\{e<M_1-t\}}-\frac{\f(t)-\f(t-e)}{F(t)-F(t-e)}1_{\{e<t\}}\right]\,dF_E(e).
\end{align}
Since $F(0)=0$ and $F(M_1)=1$, the solution has to be zero at $t=0$ and $t=M_1$.
Suppose there exists a point $t\in(0,M_1)$ such that $\f(t)>0$ for a solution $\f$. If the maximum of $\f$ is reached at the point $s\in(0,M_1)$, we get that the right-hand side of (\ref{homogeneous_eq}) is nonpositive at $t=s$, whereas the left-hand side is $>0$, a contradiction. Note that we use condition (\ref{side_condition}).

If the supremum is not attained, we take the limit from the left and get that the limit on the left is strictly positive, whereas the limit on the right in nonpositive, which leads again to a contradiction.

A similar argument holds if there exists a point $t$ such that $\f(t)<0$. It now follows from Theorem 3.4 in \cite{kress:89} that the integral equation has a unique solution (see also Lemma 10.1 on p.\ 294 of \cite{piet_geurt:14}).
\end{proof}

\begin{figure}[!ht]
\centering
\includegraphics[width=0.5\textwidth]{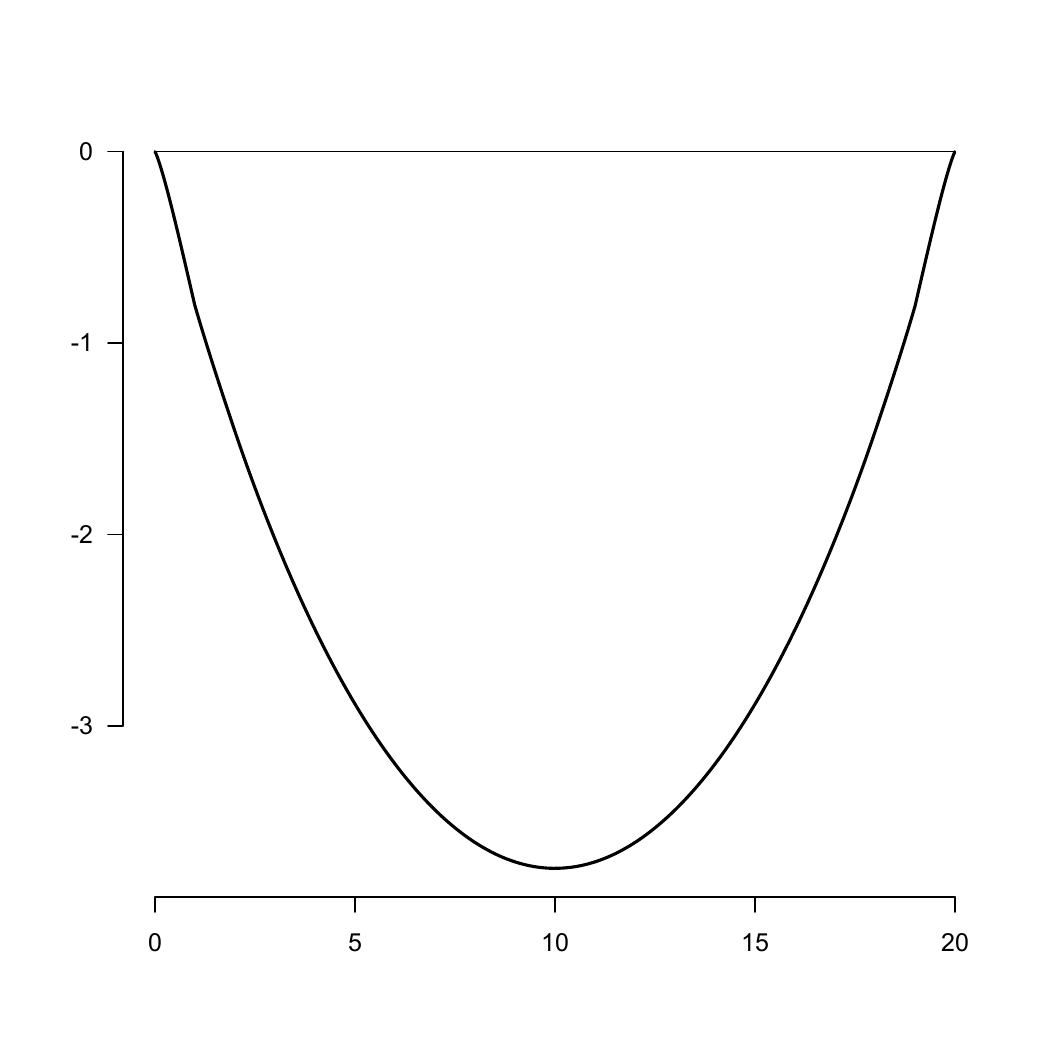}
\caption{The solution of (\ref{phi-int}) if $F_0$ is uniform on $[0,20]$ and $F_E$ is uniform on $[1,30]$.}
\label{figure:phi_mean}
\end{figure}

\begin{example}
{\rm
We consider the example $\psi\equiv1$, $F_E$ is the uniform distribution on $[1,30]$ and $F_0$ is uniform on $[0,20]$. In that case all conditions of Lemma \ref{lemma:uniqueness_sol_inteq} are satisfied. One could think of an exposure time of at most $30$ days and at least one day in the incubation time model.
It is shown in Section \ref{subsection:proof_8.1} that
\begin{align*}
\sqrt{n}\left\{\int x\,d\hat F_n(x)-\int x\,dF_0(x)\right\}\stackrel{{\cal D}}\longrightarrow N(0,\s^2),
\end{align*}
where
\begin{align*}
\s^2=-\int_0^{M_1}\f_{F_0}(x)\,dx,
\end{align*}
where $\f_{F_0}$ solves (\ref{phi-int}) for $\psi\equiv1$ and $F=F_0$, where we choose in the present example $F_0$ to be the uniform distribution on $[0,M_1]$.
We can solve equation (\ref{phi-int}) approximately by a matrix equation, see \cite{github:20}.
}
\end{example}

\begin{example}
{\rm
If $\psi=1_{[0,a)}$, where $a\in(0,M_1)$, $\psi$ is not continuous on $[0,M_1]$, but still satisfies the conditions of the lemma. In this case the solution has a jump at $a$, as is shown in Figure \ref{figure:phi_discont} for $a=10$.

\begin{figure}[!ht]
\centering
\includegraphics[width=0.5\textwidth]{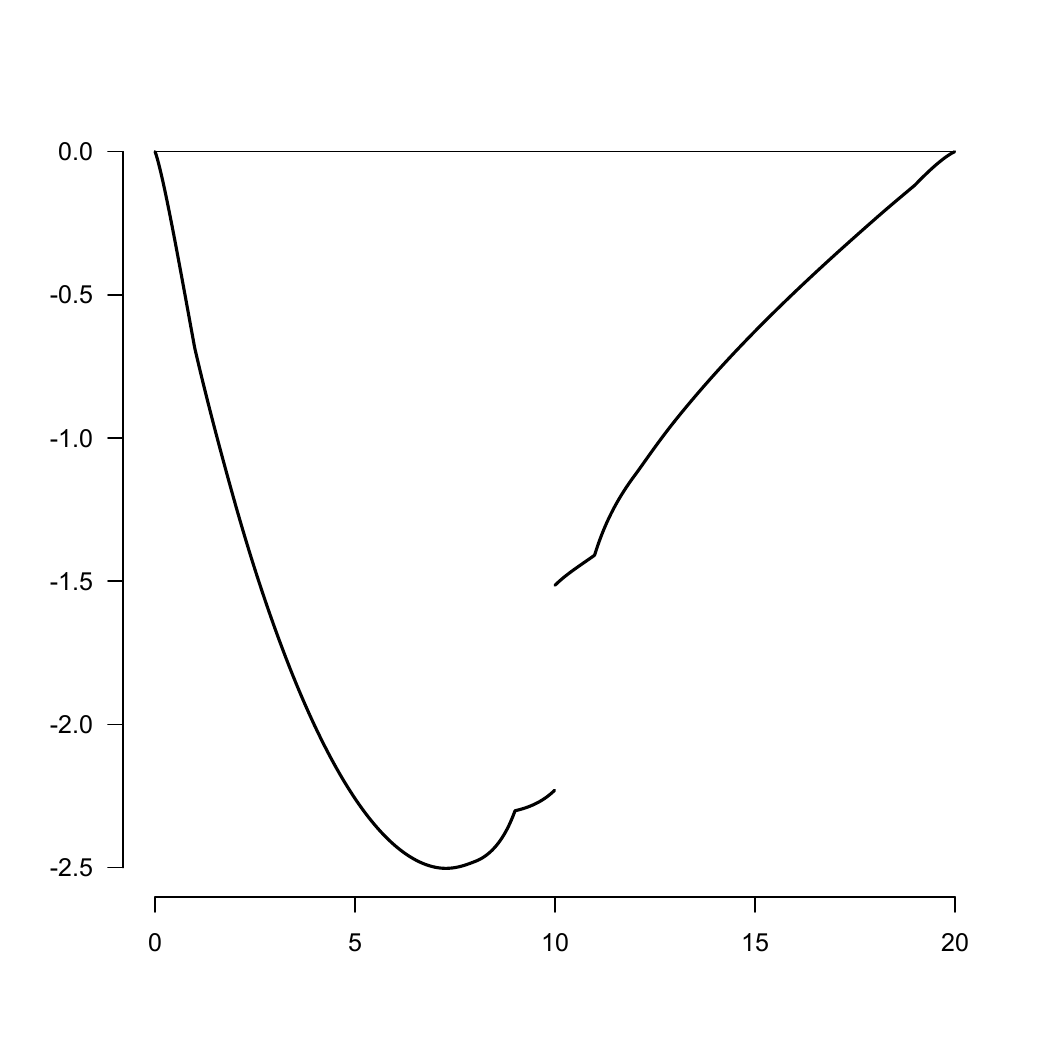}
\caption{The solution for $\psi=1_{[0,10)}$ if $F_0$ is uniform on $[0,20]$ and $F_E$ is uniform on $[1,30]$.}
\label{figure:phi_discont}
\end{figure}

This corresponds to the functional
\begin{align*}
x\mapsto \Psi_{a,F}(x)=g_a(x)-\int g_a(x)\,dF(x),
\end{align*}
where the function $g_a$ is given by
\begin{align*}
g_a(x)=x1_{[0,a)}(x)+a\,1_{[a,M_1]}(x),\qquad x\in[0,M_1].
\end{align*}
Note that $g_a$ is continuous, a fact that is absolutely essential in the smooth functional theory.

Integration by parts gives:
\begin{align*}
&\int g_a(x)\,dF_0(x)=aF_0(a-)-\int_0^a F_0(x)\,dx+a\{1-F_0(a)\}=
\int_0^a \{1-F_0(x)\}\,dx.
\end{align*}
Hence we get generally:
\begin{align*}
\int\Psi_{a,F}(x)\,d\bigl(F-F_0\bigr)(x)=-\int\Psi_{a,F}(x)\,dF_0(x)=-\int_0^a\{F(x)-F_0(x)\}\,dx
\end{align*}
So in studying expressions like $\int_0^{a}\{\hat F_n(x)-F_0(x)\}\,dx$, for $a\in(0,M_1]$, where $\hat F_n$ is the nonparametric MLE and $F_0$ the underlying distribution function, and in showing things like
\begin{align*}
\sqrt{n}\sup_{a\in[0,M_1]}\left|\int_0^{a}\{\hat F_n(x)-F_0(x)\}\,dx\right|=O_p(1),
\end{align*}
we need a functional of this type in the proof of the limit result for the MLE in the incubation time model.
}
\end{example}

\subsection{Properties of the solution of the integral equation (\ref{phi-int})}
\label{subsection:properties_integral equations}
The integral equation (\ref{phi-int}) has the same structure as the integral equations for the interval censoring, case 2, model. This equation is given by (10.33) on p.\ 292 of \cite{piet_geurt:14}.

Let, for $F\in{\cal F}$, the function $d_F$ be defined by
\begin{align}
\label{d_F}
d_F(t)=
\{1-F(t)\}\int_{e\ge t}e^{-1}\,dF_E(e)+F(t)\int_{e\ge M_1-t}e^{-1}\,dF_E(e).\qquad t\in[0,M_1].
\end{align}
We have:
\begin{align}
\label{lower_bound_d_F0}
&\inf_{t\in[0,M_1]}d_F(t)\nonumber\\
&\ge \min\left\{\left(1-F(\tfrac12M_1)\right)\int_{e\ge \tfrac12M_1}e^{-1}\,dF_E(e),F(\tfrac12M_1)\int_{e\ge \tfrac12M_1}e^{-1}\,dF_E(e)\right\}.
\end{align}
So, if $\min\{F(\tfrac12M_1),1-F(\tfrac12M_1)\}\ge c>0$, for all $F\in{\cal F}$ and a constant $c>0$, we get $d_F(t)\ge c'>0$ for another constant $c'>0$ and all $t\in[0,M_1]$ and $F\in{\cal F}$, provided
\begin{align*}
\int_{e\ge \tfrac12M_1}e^{-1}\,dF_E(e)>0,
\end{align*}
which follows from our condition (F1) on $F_E$. 

Since $\e<\tfrac12M_1$, the condition
\begin{align}
\label{lower_bound_d_F}
\inf_{F\in{\cal F}}\min\{F(\tfrac12M_1),1-F(\tfrac12M_1)\}\ge c>0
\end{align}
is implied by  condition (\ref{separation_condition}). This means that we can prove that soluions of the integral equation are uniformly bounded for $F\in{\cal F}$. We get the folloiwing result.

\begin{lemma}
\label{lemma:uniform_boundedness}
Let $F_E$ be a distribution function on $[0,M_2]$, satisfying condition (F1) and let the class of distribution function ${\cal F}$ be defined by (F2).  Let $\psi:[0,M_1]\to\R$ be a bounded right-continuous function with left-limits (cadlag) on $[0,M_1]$.              
Then the solutions $\f_F$ of equation (\ref{phi-int}) are uniformly bounded in $F\in{\cal F}$.
\end{lemma}

\begin{proof}
The equation can be written
\begin{align}
\label{reduced_eq}
&\f(t)\nonumber\\
&=d_F(t)^{-1}\psi(t)+d_F(t)^{-1}F(t)\{1-F(t)\}\nonumber\\
&\qquad\qquad\cdot\int e^{-1}\left[\frac{\f(t+e)-\f(t)}{F(t+e)-F(t)}1_{\{e<M_1-t\}}-\frac{\f(t)-\f(t-e)}{F(t)-F(t-e)}1_{\{e<t\}}\right]\,dF_E(e).
\end{align}
where $d_F$ is defined by (\ref{d_F}). Since $d_F(t)^{-1}\psi(t)$ is clearly uniformly bounded, using (\ref{lower_bound_d_F0}), (\ref{lower_bound_d_F}) and the boundedness of $\psi$, we only have to consider the second term on the right-hand side of (\ref{reduced_eq}). But for this term we use the same type of argument again that we used in the proof of Lemma \ref{lemma:uniqueness_sol_inteq}. If the solution $\f_F$ attains its supremum at $s\in[0,M_1]$, this term is nonpositive at $s$. Otherwise we take the limit from the left. So the upper bound is bounded by
\begin{align*}
d_F(s)^{-1}\psi(s)+d_F(s)^{-1}F(s)\{1-F(s)\}
\end{align*}
or its limit from the left at $s$ if the supremum is not attained.

The same type of argument applies to the lower bound.
\end{proof}

Analogously to the interval censoring model, we have to consider the function $\xi_F$, defined by
\begin{align*}
\xi_F(x)=\frac{\phi_F(x)}{F(x)\{1-F(x)\}},\quad x\in(0,M_1),
\end{align*}
where $\f_F$ solves (\ref{phi-int}) and where we define $0/0=0$. The function $\xi_F$ satisfies a similar integral equation as the function $\f_F$. This is seen in the following way.

We have:
\begin{align*}
\f_F(y)-\f_F(x)&=\left\{\xi_F(y)-\xi_F(x)\right\}F(y)\{1-F(y)\}\\
&\qquad\qquad+\xi_F(x)\bigl[F(y)\{1-F(y)\}-F(x)\{1-F(x)\}\bigr].
\end{align*}
Moreover, if $F(y)>F(x)$,
\begin{align*}
\frac{F(y)\{1-F(y)\}-F(x)\{1-F(x)}{F(y)-F(x)}=1-F(x)-F(y).
\end{align*}
So we find, wrting $\f=\f_F$ and $\xi=\xi_F$:
\begin{align*}
&\int_{e<M_1-t} e^{-1}\frac{\f(t+e)-\f(t)}{F(t+e)-F(t)}\,dF_E(e)\\
&=\int_{e<M_1-t}e^{-1}\frac{\xi(t+e)-\xi(t)}{F(t+e)-F(t)}\,F(t+e)\{1-F(t+e)\}dF_E(e)\\
&\qquad+\xi(t)\int_{e<M_1-t}e^{-1}\,\{1-F(t+e)-F(t)\}\,dF_E(e)
\end{align*}
In a similar way we find:
\begin{align*}
&\int_{e<t} e^{-1}\frac{\f(t)-\f(t-e)}{F(t)-F(t-e)}\,dF_E(e)\\
&=\int_{e<t}e^{-1}\frac{\xi(t)-\xi(t-e)}{F(t)-F(t-e)}\,F(t-e)\{1-F(t-e)\}dF_E(e)\\
&\qquad+\xi(t)\int_{e<t}e^{-1}\,\{1-F(t)-F(t-e)\}\,dF_E(e).
\end{align*}

So we find that $\xi_F$ solves the equation
\begin{align*}
c(t)\xi(t)&=\left\{d_F(t)F(t)\{1-F(t)\right\}^{-1}\psi(t)\\
&\qquad+\int_{e<M_1-t}e^{-1}\frac{\xi(t+e)-\xi(t)}{F(t+e)-F(t)}\,F(t+e)\{1-F(t+e)\}dF_E(e)\\
&\qquad-\int_{e<t}e^{-1}\frac{\xi(t)-\xi(t-e)}{F(t)-F(t-e)}\,F(t-e)\{1-F(t-e)\}dF_E(e)
\end{align*}
in $\xi$, where $c(t)$ is defined by:
\begin{align*}
c(t)&=1+\int_{e<t}e^{-1}\,\{1-F(t)-F(t-e)\}\,dF_E(e)\\
&\qquad-\int_{e<M_1-t}e^{-1}\,\{1-F(t+e)-F(t)\}\,dF_E(e)\\
&=1+\int_{e<t\wedge(M_1-t)}e^{-1}\,\{F(t+e)-F(t-e)\}\,dF_E(e)\\
&\qquad+\int_{t\le e<M_1-t}e^{-1}\,\{1-F(t)\}\,dF_E(e)+\int_{M_1-t\le e<t}e^{-1}\,F(t)\,dF_E(e).
\end{align*}

So the equations have the same properties as the integral equations in the interval censoring, case 2, model and we get in particular the following lemma (Lemma 10.5 on p.\ 297 of \cite{piet_geurt:14}).

\begin{lemma}
\label{lemma:ubder}
Let ${\cal F}$ be the class of distribution functions on $[0,M_1]$ that are either absolutely continuous, with a continuous density staying away from zero on $[0,M_1]$, or a piecewise constant distribution function with a finite number of jumps, satisfying assumption (F2). Let $\psi$ be continuous, with a bounded derivative except at  an at most  countable number of points, where right and left limits exist. Then:
\begin{enumerate}
\item[(i)] The derivative of $\f_F$ at the points of continuity is  bounded, uniformly over $F \in
\cal F$ and the points of continuity, implying
$$
|\f_F(y)-\f_F(x)| \leq K_1\, |y-x|
$$
if $y$ and $x$ are in the same interval between jumps. $K_1$ is independent of $F$ and $x$.
The same holds when $\f_F$ is replaced by $\xi_F$.
\item[(ii)] At the discontinuity points $x$ of $F$,
\begin{align*}
|\f_F(x)-\f_F(x-)| \leq K_2 \, |F(x)-F(x-)|+K_3|k(x)-k(x-)|,
\end{align*}
and
\begin{align*}
\xi_F(x)-\xi_F(x-)| \leq K_2 \, |F(x)-F(x-)|+K_3|k(x)-k(x-)|,
\end{align*}
with $K_2>0$ and $K_3>0$ independent of $x$ and $F$.
\end{enumerate}
\end{lemma}

The proof follows the steps of the proof of Lemma 10.5 on p.\ 297 of \cite{piet_geurt:14} and is therefore omitted. Note however, that we have to extend this lemma, because we want to treat discontinuous functions $\psi$ such as $1_{[0,a]}$ on the right-hand side of the integral equation.

We will need the following extension, analogous to Corollary 4.2 in \cite{piet:96}.

\begin{corollary}
\label{cor:ubder}
Let ${\cal F}$ be the class of distribution functions on $[0,M_1]$ that are either absolutely continuous, with a continuous density staying away from zero on $(0,M_1)$, or a piecewise constant distribution function with a finite number of jumps, satisfying assumption (F2). Let $\psi$ be a bounded right-continuous function with art most a finite set of discontinuities $D$, with a bounded derivative except at  an at most  finite number of points, where right and left limits exist. Then:
\begin{enumerate}
\item[(i)] The derivative of $\f_F$ at the points of continuity is  bounded, uniformly over $F \in
\cal F$ and the points of continuity, implying
$$
|\f_F(y)-\f_F(x)| \leq K_1\, |y-x|
$$
if $y$ and $x$ are in the same interval between jumps. $K_1$ is independent of $F$ and $x$.
The same holds when $\f_F$ is replaced by $\xi_F$.
\item[(ii)] At the discontinuity points $x$ of $F$ or $\psi$:
$$
|\f_F(x)-\f_F(x-)| \leq K_2|F(x)-F(x-)+K_3|\psi(x)-\psi(x-)|
$$
and similarly
$$
|\xi_F(x)-\xi_F(x-)| \leq K_2|F(x)-F(x-)+K_3|\psi(x)-\psi(x-)|
$$
where $K_2$ and $K_3$ are positive constants independent of $x$ and $F$.
\end{enumerate}
\end{corollary}
The proof follows from the preceding results in the same way as the proofs of Lemma 4.3 and Corollary 4.2 in \cite{piet:96} follows from the correponding results on the integral equation there.

\subsection{Proof of Theorem 4}
\label{subsection:proof_theorem4}

A fundamental tool in our proof is the so-called ``switch relation'', see, e.g., Section 3.8 in \cite{piet_geurt:14}.
We define, for $a\in(0,1)$
\begin{align*}
U_n(a)=\text{argmin}\{t\in[0,\infty): V_n(t)-aG_n(t)\},
\end{align*}
where $V_n$ is defined by (3.7) and $G_n = G_{n,\hat F_n}$ is defined by (3.6) for $F=\hat F_n$.
Then we have the so-called {\it switch relation}:
\begin{align*}
\hat F_n(t)\ge a \iff G_n(t)\ge G_n(U_n(a))
\iff t\ge U_n(a).
\end{align*}
see, e.g., (3.35) and Figure 3.7 in Section 3.8 of \cite{piet_geurt:14}.

We have:
\begin{align*}
\P\left\{n^{1/3}\left\{\hat F_n(t_0)-F_0(t_0)\right\}\ge x\right\}
=\P\left\{n^{1/3}\left\{U_n(a_0+n^{-1/3}x)-t_0\right\}\le0\right\},
\end{align*}
where $a_0=F_0(t_0)$. Using the property that the argmin function does not change if we add constants to the object function, we get:
\begin{align*}
&U_n(a_0+n^{-1/3}x)
=\text{argmin}\left\{t\in[0,\infty): V_n(t)-\bigl(a_0+n^{-1/3}x\bigr)G_n(t)\right\}\\
&=\text{argmin}\left\{t\in[0,\infty): V_n(t)-V_n(t_0)-\bigl(a_0+n^{-1/3}x\bigr)\bigl\{G_n(t)-G_n(t_0)\bigr\}\right\}\\
&=\text{argmin}\Biggl\{t_0+n^{-1/3}t\ge0:\int_{u\in(t_0,t_0+n^{-1/3}t]}\hat F_n(u)\,dG_n(u)\\
&\qquad\qquad+W_{n,\hat F_n}(t_0+n^{-1/3}t)
-W_{n,\hat F_n}(t_0)-n^{-1/3}x\left\{G_n(t_0+n^{-1/3}t)-G_n(t_0)\right\}\Biggr\},
\end{align*}
where
\begin{align*}
\int_{u\in(t,v]}\hat F_n(u)\,dG_n(u)
=-\int_{u\in[v,t)}\hat F_n(u)\,dG_n(u),\qquad\text{ if }v<t.
\end{align*}

\begin{remark}
{\rm Note that by the assumptions on $F_E$ and $F_0$ we may assume that $\hat F_n(s)-\hat F_n(s-e)$ stays away from zero for $n$ sufficiently large if $e\ge \e$ and $s-e\ge0$.
}
\end{remark}

Let $m=\max_j(S_j-E_j)_+$, and let $X_n$ be defined as in Lemma \ref{lemma:BM_part} below. Then, letting $\d_1=1_{\{s\le e\}}$, $\d_2=1_{\{e<s\le M_1\}}$ and $\d_3=1-\d_1-\d_2$.
\begin{align*}
&W_{n,\hat F_n}(t_0+n^{-1/3}t)-W_{n,\hat F_n}(t_0)\\
&=X_n(t)+\int_{t_0<s\le t_0+n^{-1/3}t}\frac{\d_1}{\hat F_n(s)}\,dQ_0(e,s)\\
&\qquad\qquad-\int_{t_0<s-e\le t_0+n^{-1/3}t}\frac{\d_2}{\hat F_n(s)-\hat F_n(s-e)}\,dQ_0(e,s)\\
&\qquad\qquad+\int_{t_0<s\le t_0+n^{-1/3}t}\frac{\d_2}{\hat F_n(s)-\hat F_n(s-e)}\,dQ_0(e,s)\\
&\qquad\qquad-\int_{t_0<s-e\le t_0+n^{-1/3}t}\frac{\d_3}{1-\hat F_n(s-e)}\,dQ_0(e,s)\\
&=X_n(t)+\int_{t_0<s\le (t_0+n^{-1/3}t)\wedge m,\,s\le e}e^{-1}\frac{F_0(s)}{\hat F_n(s)}\,ds\,dF_E(e)\\
&\qquad\qquad-\int_{t_0<s-e\le t_0+n^{-1/3}t,\,e<s\le m}e^{-1}\frac{F_0(s)-F_0(s-e)}{\hat F_n(s)-\hat F_n(s-e)}\,ds\,dF_E(e)\\
&\qquad\qquad+\int_{t_0<s\le t_0+n^{-1/3}t,\,e<s\le m}e^{-1}\frac{F_0(s)-F_0(s-e)}{\hat F_n(s)-\hat F_n(s-e)}\,ds\,dF_E(e)\\
&\qquad\qquad-\int_{t_0<s-e\le t_0+n^{-1/3}t,\,s> m}e^{-1}\frac{1-F_0(s-e)}{1-\hat F_n(s-e)}\,ds\,dF_E(e).
\end{align*}
By a change of variables, we can write the last expression in the form:
\begin{align*}
&X_n(t)+\int_{t_0<s\le (t_0+n^{-1/3}t)\wedge m}e^{-1}\frac{F_0(s)}{\hat F_n(s)}\,ds\,dF_E(e)\\
&\qquad\qquad-\int_{t_0<s\le t_0+n^{-1/3}t,\,s+e\le m}e^{-1}\frac{F_0(s+e)-F_0(s)}{\hat F_n(s+e)-\hat F_n(s)}\,ds\,dF_E(e)\\
&\qquad\qquad+\int_{t_0<s\le t_0+n^{-1/3}t,\,e<s\le m}e^{-1}\frac{F_0(s)-F_0(s-e)}{\hat F_n(s)-\hat F_n(s-e)}\,ds\,dF_E(e)\\
&\qquad\qquad-\int_{t_0<s\le t_0+n^{-1/3}t,\,s+e> m}e^{-1}\frac{1-F_0(s)}{1-\hat F_n(s)}\,ds\,dF_E(e),
\end{align*}
For future reference, we define
\begin{align}
\label{process_Y_n}
&Y_n(t)=\int_{t_0<s\le (t_0+n^{-1/3}t)\wedge m}e^{-1}\frac{F_0(s)}{\hat F_n(s)}\,ds\,dF_E(e)\nonumber\\
&\qquad\qquad-\int_{t_0<s\le t_0+n^{-1/3}t,\,s+e\le m}e^{-1}\frac{F_0(s+e)-F_0(s)}{\hat F_n(s+e)-\hat F_n(s)}\,ds\,dF_E(e)\nonumber\\
&\qquad\qquad+\int_{t_0<s\le t_0+n^{-1/3}t,\,e<s\le m}e^{-1}\frac{F_0(s)-F_0(s-e)}{\hat F_n(s)-\hat F_n(s-e)}\,ds\,dF_E(e)\nonumber\\
&\qquad\qquad-\int_{t_0<s\le t_0+n^{-1/3}t,\,s+e> m}e^{-1}\frac{1-F_0(s)}{1-\hat F_n(s)}\,ds\,dF_E(e),
\end{align}

We have the following lemma.
\begin{lemma}
\label{lemma:BM_part}
Let, under the conditions of Theorem 4, the process $X_n$ be defined by:
\begin{align*}
X_n(t)&=\int_{t_0<s\le t_0+n^{-1/3}t}\frac{\d_1}{\hat F_n(s)}\,d(\Q_n-Q_0)(e,s)\\
&\qquad\qquad-\int_{t_0<s-e\le t_0+n^{-1/3}t}\frac{\d_2}{\hat F_n(s)-\hat F_n(s-e)}\,d(\Q_n-Q_0)(e,s)\\
&\qquad\qquad+\int_{t_0<s\le t_0+n^{-1/3}t}\frac{\d_2}{\hat F_n(s)-\hat F_n(s-e)}\,d(\Q_n-Q_0)(e,s)\\
&\qquad\qquad-\int_{t_0<s-e\le t_0+n^{-1/3}t}\frac{\d_3}{1-\hat F_n(s-e)}\,d(\Q_n-Q_0)(e,s),
\end{align*}
where $\d_1=1_{\{s\le e\}}$, $\d_2=1_{\{e<s\le M_1\}}$ and $\d_3=1-\d_1-\d_2$.
Let $t_0$ be an interior point of the support of $f_0$.  Then $n^{2/3}X_n$ converges in distribution, in the Skorohod topology, to the process
\begin{align*}
t\mapsto \sqrt{c_E}W(t),\qquad t\in\R,
\end{align*}
where $c_E$ and $W$ are the same as in Theorem 4.
\end{lemma}

\begin{proof}[Proof of Lemma \ref{lemma:BM_part}]
This follows from the convergence to the same limit process of
\begin{align}
\label{X_n_F_0}
X_n(t)&=\int_{t_0<s\le t_0+n^{-1/3}t}\frac{\d_1}{F_0(s)}\,d(\Q_n-Q_0)(e,s)\\
&\qquad\qquad-\int_{t_0<s-e\le t_0+n^{-1/3}t}\frac{\d_2}{F_0(s)-F_0(s-e)}\,d(\Q_n-Q_0)(e,s)\\
&\qquad\qquad+\int_{t_0<s\le t_0+n^{-1/3}t}\frac{\d_2}{F_0(0)-F_0(s-e)}\,d(\Q_n-Q_0)(e,s)\\
&\qquad\qquad-\int_{t_0<s-e\le t_0+n^{-1/3}t}\frac{\d_3}{1-F_0(s-e)}\,d(\Q_n-Q_0)(e,s),
\end{align}
and the consistency of $\hat F_n$ together with the entropy with bracketing for the $L_2$-norm of the functions
\begin{align*}
(e,s)\mapsto\frac1{F(s)-F(s-e)},\qquad s\in[t_0-n^{-1/3}M,t_0+n^{-1/3}M],\qquad e\ge\e,
\end{align*}
for $M>0$ and distribution functions $F$ such that $\{F(s)-F(s-e)\}1_{\{e\ge\e\}}$ stays away from zero for $s$ in the relevant interval (see, e.g., p.\  59 of \cite{piet_geurt:14}).

Note that we get for the variance of (\ref{X_n_F_0}):
\begin{align*}
&\text{var}\bigl(X_n(t)\bigr)\\
&\sim n^{-1}\int_{t_0<s\le t_0+n^{-1/3}t}\frac{\d_1}{F_0(s)^2}\,dQ_0(e,s)\\
&\qquad\qquad+n^{-1}\int_{t_0-e<s\le t_0-e+n^{-1/3}t}\frac{\d_2}{\bigl\{F_0(s)-F_0(s-e)\bigr\}^2}\,dQ_0(e,s)\\
&\qquad\qquad+n^{-1}\int_{t_0<s\le t_0+n^{-1/3}t}\frac{\d_2}{\bigl\{F_0(s)-F_0(s-e)\bigr\}^2}\,dQ_0(e,s)\\
&\qquad\qquad+n^{-1}\int_{t_0-e<s\le t_0-e+n^{-1/3}t}\frac{\d_3}{\bigl\{1-F_0(s-e)\bigr\}^2}\,dQ_0(e,s)\\
&= n^{-1}\int_{t_0<s\le t_0+n^{-1/3}t,\,s\le M_1\wedge e}e^{-1}\frac{1}{F_0(s)}\,ds\,dF_E(e)\\
&\qquad\qquad+n^{-1}\int_{t_0<s\le t_0+n^{-1/3}t,\,s+e\le M_1}e^{-1}\frac1{F_0(s+e)-F_0(s)}\,ds\,dF_E(e)\\
&\qquad\qquad+n^{-1}\int_{t_0<s\le t_0+n^{-1/3}t,\,e<s\le M_1}e^{-1}\frac1{F_0(s)-F_0(s-e)}\,ds\,dF_E(e)\\\
&\qquad\qquad+n^{-1}\int_{t_0<s\le t_0+n^{-1/3}t,\,s+e>M_1}e^{-1}\frac1{1-F_0(s)}\,ds\,dF_E(e)\\
&\sim n^{-4/3}t\int e^{-1}\frac1{F_0(t_0)-F_0(t_0-e)}\,dF_E(e)+n^{-4/3}t\int e^{-1}\frac1{F_0(t_0+e)-F_0(t_0)}\,dF_E(e).
\end{align*}
\end{proof}

We will also need the following rate result for the $L_2$-distance.

\begin{lemma}
\label{lemma:L_2-bound}
Let the conditions of Theorem 4 be satisfied.. Then
\begin{align}
\label{L_2-bound}
\|\hat F_n-F_0\|=O_p\left(n^{-1/3}\right).
\end{align}
\end{lemma}

\begin{proof}
We define the (convolution) density $q_F$ by (\ref{convolution}) and follow the exposition in \cite{geer:00}, Example 7.4.4. The condition that the exposure time $E$ stays away from zero is comparable to the condition (7.41) on p. 116 of \cite{geer:00} that the intervals $[U,V]$ have a length which stays away from zero. In this case we find that the squared Hellinger distance satisfies:
\begin{align*}
&h\left(q_{\hat F_n},q_{F_0}\right)^2=\tfrac12\int\left\{\sqrt{q_{\hat F_n}}-\sqrt{q_{F_0}}\right\}^2\,ds\,dF_E(e)\\
&=\tfrac12\int_{e\in[\e,M_2]} e^{-1}\int_{s\in[0,M_1+M_2]}\left\{\sqrt{\hat F_n(s)-\hat F_n(s-e)}-\sqrt{F_0(s)-F_0(s-e)}\right\}^2\,ds\,dF_E(e)\\
&=O_p\left(n^{-2/3}\right),
\end{align*}
see (7.42) in \cite{geer:00}.

We can write the next to last term above in the form (using Fubini's theorem and a change of variables):
\begin{align*}
&\tfrac12\int_{s\le M_1+M_2} \left\{\sqrt{\hat F_n(s)}-\sqrt{F_0(s)}\right\}^2\int_{e\ge s} e^{-1}\,dF_E(e)\,ds\\
&+\tfrac12\int_{s\le M_1+M_2} \left\{\sqrt{1-\hat F_n(s)}-\sqrt{1-F_0(s)}\right\}^2\int_{e\ge M_1-s} e^{-1}\,dF_E(e)\,ds\\
&+\tfrac12\int_{s\le M_1+M_2} \left\{\sqrt{\hat F_n(s+e)-\hat F_n(s)}-\sqrt{F_0(s+e)-F_0(s)}\right\}^2\int_{e<M_1-s} e^{-1}\,dF_E(e)\,ds\\
&+\tfrac12\int_{s\le M_1+M_2} \left\{\sqrt{\hat F_n(s)-\hat F_n(s-e)}-\sqrt{F_0(s)-F_0(s-e)}\right\}^2\int_{e<t} e^{-1}\,dF_E(e)\,ds.
\end{align*}
So we get in particular
\begin{align*}
&\tfrac12\int_{s\le M_1+M_2} \left\{\sqrt{\hat F_n(s)}-\sqrt{F_0(s)}\right\}^2\int_{e\ge s} e^{-1}\,dF_E(e)\,ds\\
&\qquad+\tfrac12\int_{s\le M_1+M_2} \left\{\sqrt{1-\hat F_n(s)}-\sqrt{1-F_0(s)}\right\}^2\int_{e\ge M_1-s} e^{-1}\,dF_E(e)\,ds\\
&=O_p\left(n^{-2/3}\right),
\end{align*}
implying
\begin{align*}
&\tfrac12\int_{s\le M_1+M_2} \left\{\sqrt{\hat F_n(s)}-\sqrt{F_0(s)}\right\}^2\,ds
+\tfrac12\int_{s\le M_1+M_2} \left\{\sqrt{1-\hat F_n(s)}-\sqrt{1-F_0(s)}\right\}^2\,ds\\
&=O_p\left(n^{-2/3}\right),
\end{align*}
by our assumptions on $F_E$. The relation
\begin{align*}
(a-b)^2=\left\{\sqrt{a}-\sqrt{b}\right\}^2\left\{\sqrt{a}+\sqrt{b}\right\}^2\le2\left\{\sqrt{a}-\sqrt{b}\right\}^2\,\qquad ,\,a,b\in[0,1],
\end{align*}
now gives the result.
\end{proof}

The following lemma. corresponding to Lemma 4,4 on p.\ 146 of \cite{piet:96} is crucial in our proof.

\begin{lemma}
\label{lemma:supremum_lemma}
Let the conditions of Theorem 4 be satisfied and let $\hat F_n$ be the MLE. Then:
\begin{align*}
\sup_{t\in[0,M_1]}\sqrt{n}\int_0^t\bigl\{\hat F_n(x)-F_0(x)\bigr\}\,dx=O_p\left(1\right).
\end{align*}
\end{lemma}
\begin{proof}
Let $\f_{t,\hat F_n}$ be the solution of the integral eqaution
\begin{align*}
\int e^{-1}\left[\frac{\f(x+e)-\f(x)}{\hat F_n(x+e)-\hat F_n(x)}-\frac{\f(x)-\f(x-e)}{\hat F_n(x)-\hat F_n(x-e)}\right]\,dF_E(e)
=1_{[0,t)}(x),\qquad x\in[0,M_1],
\end{align*}
see (\ref{phi-int}) above, and let $\th_{t,\hat F_n}$ be defined by
\begin{align*}
\th_{t,\hat F_n}(e,s,\d_1,\d_2)&=\d_1\frac{\f_{t,\hat F_n}(s)}{\hat F_n(s)}-\d_3\frac{\f_{t,\hat F_n}(s-e)}{1-\hat F_n(s-e)}
+\d_2\frac{\f(s)-\f(s-e)}{\hat F_n(s)-\hat F_n(s-e)}\,,
\end{align*}
where $\d_1=1_{\{s\le e\}}$, $\d_2=1_{\{e<s\le M_1\}}$ and $\d_3=1-\d_1-\d_2$. A picture of such a $\f_{t,\hat F_n}$ is given in Figure \ref{figure:phi_MLE}.

\begin{figure}[!ht]
\centering
\includegraphics[width=0.5\textwidth]{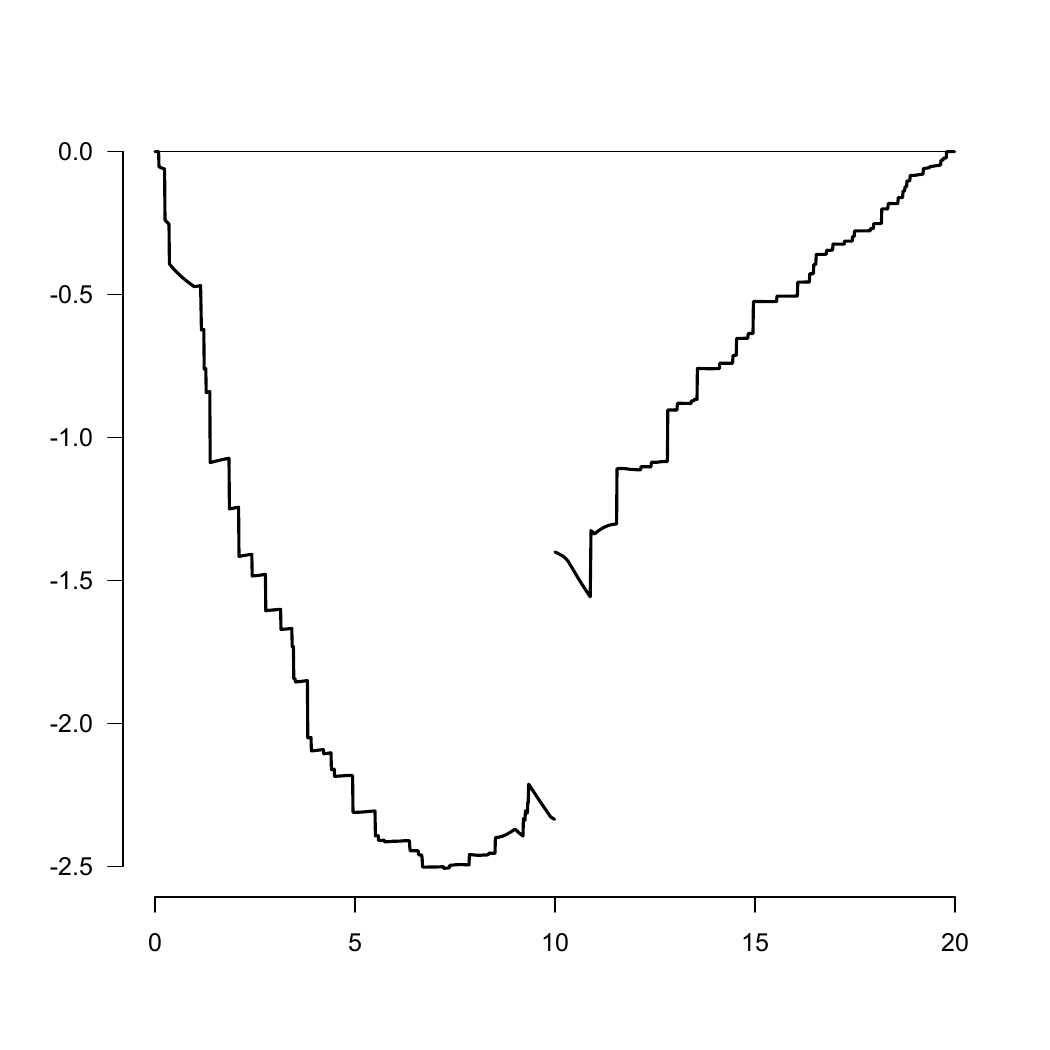}
\caption{The solution $\f_{t,\hat F_n}$for $t=10$ and $n=10^4$, if $F_0$ is uniform on $[0,20]$, and $F_E$ is uniform on $[1,30]$.}
\label{figure:phi_MLE}
\end{figure}

Note the similarity of Figure \ref{figure:phi_MLE} to Figure \ref{figure:phi_discont} above, where $F_0$ instead of $\hat F_n$ is used.
We have, if $\f=\f_{t,\hat F_n}$,
\begin{align*}
&\int_0^t\bigl\{\hat F_n(x)-F_0(x)\bigr\}\,dx=\int_{x=0}^{M_1}1_{[0,t)}(x)\bigl\{\hat F_n(x)-F_0(x)\bigr\}\,dx\\
&=\int_{x=0}^{M_1}\bigl\{\hat F_n(x)-F_0(x)\bigr\}\int_{e=0}^{M_2} e^{-1}\left[\frac{\f(x+e)-\f(x)}{\hat F_n(x+e)-\hat F_n(x)}\right.\\
&\qquad\qquad\qquad\qquad\qquad\qquad\qquad\qquad\qquad\qquad\left.-\frac{\f(x)-\f(x-e)}{\hat F_n(x)-\hat F_n(x-e)}\right]\,dF_E(e)\,dx\\
&=-\int_{e=0}^{M_2}e^{-1}\int_{x=e}^{M_1}\frac{\{F_0(x)-F_0(x-e)\}\{\f(x)-\f(x-e)\}}{\hat F_n(x)-\hat F_n(x-e)}\,dx\,dF_E(e)\\
&\qquad\qquad-\int_{e=0}^{M_2}e^{-1}\int_{x=0}^{e}\frac{F_0(x)\f(x)}{\hat F_n(x)}\,dx\,dF_E(e)\\
&\qquad\qquad\qquad\qquad\qquad\qquad+\int_{e=0}^{M_2}e^{-1}\int_{x=M_1}^{M_1+e}\frac{\{1-F_0(x-e)\}\f(x-e)}{1-\hat F_n(x-e)}\,dx\,dF_E(e)\\
&=-\int \th_{t,\hat F_n}(e,s,\d_1,\d_2)\,dQ_0(e,s).
\end{align*}
Note that
\begin{align*}
&\int_{e=0}^{M_2}\hat F_n(x)\int_{x=0}^{M_1} e^{-1}\left[\frac{\f(x+e)-\f(x)}{\hat F_n(x+e)-\hat F_n(x)}
-\frac{\f(x)-\f(x-e)}{\hat F_n(x)-\hat F_n(x-e)}\right]\,dF_E(e)\,dx\\
&=\int_{e=0}^{M_2}e^{-1}\int_{x=e}^{M_1}\{\f(x)-\f(x-e)\}\,dx\,dF_E(e)+\int_{e=0}^{M_2}e^{-1}\int_{x=0}^{e}\f(x)\,dx\,dF_E(e)\\
&\qquad\qquad\qquad\qquad\qquad\qquad-\int_{e=0}^{M_2}e^{-1}\int_{x=M_1}^{M_1+e}\f(x-e)\,dx\,dF_E(e)\\
&=0.
\end{align*}
Also note that for the argument $x+e>M_1$ we use
\begin{align*}
\hat F_n-F_0=1-F_0-\{1-\hat F_n\}.
\end{align*}
The argument now follows the reasoning of the proof of Lemma 4.4 in \cite{piet:96}, where we use the properties of the solution of the integral equations, discussed in Section \ref{subsection:Integral_equations} above and Lemma \ref{lemma:L_2-bound}. For example, we change the function $\f_{t,\hat F_n}$ to a piecewise constant version $\bar\f_{t,\hat F_n}$, piecewise constant on the same intervals as $\hat F_n$,
except possibly the interval containing $t$, for example using (4.37) on p.\ 146 of \cite{piet:96},
and define
\begin{align*}
\bar\th_{t,\hat F_n}(e,s,\d_1,\d_2)&=\d_1\frac{\bar\f_{t,\hat F_n}(s)}{\hat F_n(s)}-\d_3\frac{\bar\f_{t,\hat F_n}(s-e)}{1-\hat F_n(s-e)}
+\d_2\frac{\bar\f(s)-\bar\f(s-e)}{\hat F_n(s)-\hat F_n(s-e)}\,,
\end{align*}
where $\d_1=1_{\{s\le e\}}$, $\d_2=1_{\{e<s\le M_1\}}$ and $\d_3=1-\d_1-\d_2$.
Then, using Lemma \ref{lemma:L_2-bound} above, we find:
\begin{align*}
\sup_{t\in[0,M_1]}\left|\int\bigl\{\bar\th_{t,\hat F_n}-\th_{t,\hat F_n}\bigr\}\,dQ_0\right|=O_p\left(n^{-2/3}\right).
\end{align*}
Next we get:
\begin{align*}
\sup_{t\in[0,M_1]}\left|\int\bar\th_{t,\hat F_n}\,dQ_0\right|
=\sup_{t\in[0,M_1]}\left|\int\bar\th_{t,\hat F_n}\,d\left(Q_0-\Q_n\right)\right|+O_p\left(n^{-2/3}\right)
=O_p\left(n^{-1/2}\right)
\end{align*}
which gives the desired result.
\end{proof}

\begin{corollary}
\label{extension_corollary}
Let the conditions of Theorem 4 be satisfied and let $\hat F_n$ be the MLE. Moreover, let ${\cal G}$ be a set of right-continuous function with left limits $g:[0,M_1]\to\R$ which are of uniformly bounded variation. Then:
\begin{align*}
\sup_{g\in{\cal G}}\sqrt{n}\left|\int_0^M g(x)\bigl\{\hat F_n(x)-F_0(x)\bigr\}\,dx\right|=O_p(1).
\end{align*}
\end{corollary}

\noindent
The proof of this corollary follows in the same way as the proof of Corollary 4.3 in \cite{piet:96}.

The following rough upper bound will also be useful.

\begin{lemma}
\label{lemma:rough_upper_bound}
Let the conditions of Theorem 4 be satisfied. Then
\begin{align*}
\sup_{x\in[0,M_1]}\left|\hat F_n(x)-F_0(x)\right|=O_p\left(n^{-1/4}\right).
\end{align*}
\end{lemma}

\begin{proof}
The proof is analogous to the proof of Corollary 4.4 in \cite{piet:96}.
\end{proof}

\begin{lemma}
\label{lemma:decomp}
Let the conditions of Theorem 4 be satisfied, and let the proces $Y_n$ be defined by (\ref{process_Y_n}). Then, for arbitrary $M>0$ and $t\in[-M,M]$:
\begin{align}
\label{basic_expansion}
\int_{s\in(t_0,t_0+n^{-1/3}t]}\bigl\{\hat F_n(s)-F_0(t_0)\bigr\}\,dG_n(s)+Y_n(t)
&=\tfrac12f_0(t_0)c_En^{-2/3}t^2+B_n(t)\nonumber\\
&\qquad\qquad\qquad+o_p\left(n^{-2/3}\right),
\end{align}
where
\begin{align}
\label{def_B_n(t)}
B_n(t)&=\int e^{-1}\int_{s\in[t_0,t_0+n^{-1/3}t)}\left\{\frac{\hat F_n(s-e)-F_0(s-e)}{\hat F_n(s)-\hat F_n(s-e)}\right.\\
&\left.\qquad\qquad\qquad\qquad\qquad\qquad\qquad\qquad+\frac{\hat F_n(s+e)-F_0(s+e)}{\hat F_n(s+e)-\hat F_n(s)}\right\}\,ds\,dF_E(e).\nonumber
\end{align}
\end{lemma}
\begin{proof}
Let $Y_n$ be defined by (\ref{process_Y_n}). We can write:
\begin{align}
\label{def_Y_n2}
Y_n(t)=\int_{t_0<s\le t_0+n^{-1/3}t}e^{-1}\left\{\frac{F_0(s)-F_0(s-e)}{\hat F_n(s)-\hat F_n(s-e)}
-\frac{F_0(s+e)-F_0(s)}{\hat F_n(s+e)-\hat F_n(s)}\right\}\,ds\,dF_E(e),
\end{align}
where $F_0(s-e)=0$ and $F_0(s+e)=1$ can occur.
The last expression can be rewritten in the form $A_n(t)+B_n(t)$, where
\begin{align*}
&A_n(t)\\
&=-\int_{s\in[t_0,t_0+n^{-1/3}t)} e^{-1}\bigl\{\hat F_n(s)-F_0(s)\bigr\}\left\{\frac{1}{\hat F_n(s)-\hat F_n(s-e)}+\frac{1}{\hat F_n(s+e)-\hat F_n(s)}\right\}\\
&\qquad\qquad\qquad\qquad\qquad\qquad\qquad\qquad\qquad\qquad\qquad\qquad\qquad\qquad\qquad\qquad\,ds\,dF_E(e),
\end{align*}
and
\begin{align*}
&B_n(t)\\
&=\int e^{-1}\int_{s\in[t_0,t_0+n^{-1/3}t)}\left\{\frac{\hat F_n(s-e)-F_0(s-e)}{\hat F_n(s)-\hat F_n(s-e)}
+\frac{\hat F_n(s+e)-F_0(s+e)}{\hat F_n(s+e)-\hat F_n(s)}\right\}\,ds\,dF_E(e).
\end{align*}

We also have:
\begin{align*}
&\int_{s\in(t_0,t_0+n^{-1/3}t]}\bigl\{\hat F_n(s)-F_0(t_0)\bigr\}\,dG_n(s)\\
&=\int_{s\in(t_0,t_0+n^{-1/3}t]}\hat F_n(s)\,dG_n(s)
-F_0(t_0)\left\{G_n(t_0+n^{-1/3}t)-G_n(t_0)\right\}\\
&=\int_{s\in(t_0,t_0+n^{-1/3}t]}\left\{\frac{\hat F_n(s)-F_0(t_0)}{\{\hat F_n(s)-\hat F_n(s-e)\}^2}
+\frac{\hat F_n(s)-F_0(t_0)}{\{\hat F_n(s+e)-\hat F_n(s)\}^2}\right\}\,d\Q_n\\
&=\int_{s\in(t_0,t_0+n^{-1/3}t]}\left\{\frac{\hat F_n(s)-F_0(s)}{\{\hat F_n(s)-\hat F_n(s-e)\}^2}
+\frac{\hat F_n(s)-F_0(s)}{\{\hat F_n(s+e)-\hat F_n(s)\}^2}\right\}\,d\Q_n\\
&\qquad+\int_{s\in(t_0,t_0+n^{-1/3}t]}\left\{\frac{F_0(s)-F_0(t_0)}{\{\hat F_n(s)-\hat F_n(s-e)\}^2}
+\frac{F_0(s)-F_0(t_0)}{\{\hat F_n(s+e)-\hat F_n(s)\}^2}\right\}\,d\Q_n\\
&=\int_{s\in(t_0,t_0+n^{-1/3}t]}\left\{\frac{\hat F_n(s)-F_0(s)}{\{\hat F_n(s)-\hat F_n(s-e)\}^2}
+\frac{\hat F_n(s)-F_0(s)}{\{\hat F_n(s+e)-\hat F_n(s)\}^2}\right\}\,d\Q_n\\
&\qquad+\tfrac12c_Ef_0(t_0)t^2+o_p\left(n^{-2/3}\right)\\
&=-A_n(t)+\tfrac12c_Ef_0(t_0)n^{-2/3}t^2+o_p\left(n^{-2/3}\right),
\end{align*}
where $c_E$ is given by (5.2) in Theorem 4. The last equality is seen by writing
\begin{align*}
&\int_{s\in(t_0,t_0+n^{-1/3}t]}\left\{\frac{\hat F_n(s)-F_0(s)}{\{\hat F_n(s)-\hat F_n(s-e)\}^2}
+\frac{\hat F_n(s)-F_0(s)}{\{\hat F_n(s+e)-\hat F_n(s)\}^2}\right\}\,d\Q_n\\
&=\int_{s\in(t_0,t_0+n^{-1/3}t]}\left\{\frac{\hat F_n(s)-F_0(s)}{\{\hat F_n(s)-\hat F_n(s-e)\}^2}
+\frac{\hat F_n(s)-F_0(s)}{\{\hat F_n(s+e)-\hat F_n(s)\}^2}\right\}\,d\bigl(\Q_n-Q_0\bigr)\\
&\qquad+\int_{s\in(t_0,t_0+n^{-1/3}t]}\left\{\frac{\hat F_n(s)-F_0(s)}{\{\hat F_n(s)-\hat F_n(s-e)\}^2}
+\frac{\hat F_n(s)-F_0(s)}{\{\hat F_n(s+e)-\hat F_n(s)\}^2}\right\}\,dQ_0\\
&=-A_n(t)+o_p\left(n^{-2/3}\right),
\end{align*}
using the consistency of $\hat F_n$ to show that the first term after the next to last equality is of order $o_p\left(n^{-2/3}\right)$ and
\begin{align*}
&\int_{s\in(t_0,t_0+n^{-1/3}t]}\left\{\frac{\hat F_n(s)-F_0(s)}{\{\hat F_n(s)-\hat F_n(s-e)\}^2}
+\frac{\hat F_n(s)-F_0(s)}{\{\hat F_n(s+e)-\hat F_n(s)\}^2}\right\}\,dQ_0\\
&=\int_{s\in(t_0,t_0+n^{-1/3}t]}e^{-1}\left\{\frac{\{\hat F_n(s)-F_0(s)\}\{F_0(s)-F_0(s-e)\}}{\{\hat F_n(s)-\hat F_n(s-e)\}^2}\right.\\
&\qquad\qquad\qquad\qquad\qquad\qquad\left.+\frac{\{\hat F_n(s)-F_0(s)\}\{F_0(s+e)-F_0(s)\}}{\{\hat F_n(s+e)-\hat F_n(s)\}^2}\right\}\,ds\,dF_E(e)\\
&=\int_{s\in(t_0,t_0+n^{-1/3}t]}e^{-1}\left\{\frac{\hat F_n(s)-F_0(s)}{\hat F_n(s)-\hat F_n(s-e)}+\frac{\hat F_n(s)-F_0(s)}{\hat F_n(s+e)-\hat F_n(s)}\right\}\,ds\,dF_E(e)\\
&\qquad\qquad\qquad\qquad\qquad\qquad\qquad\qquad\qquad\qquad\qquad\qquad\qquad\qquad+O_p\left(n^{-5/6}\right),
\end{align*}
using Lemma \ref{lemma:rough_upper_bound} in the last step.
So the term $A_n(t)$ drops out and the result now follows.
\end{proof}

The term $B_n(t)$ in Lemma \ref{lemma:decomp} is now treated by using differentiable functional theory. We really have to use smooth functional theory here and cannot use simple $L_2$-bounds or other tools of that type only to show that $B_n(t)$ is of order $o_p(n^{-2/3})$. One could say that this is the heart of the difficulty of the proof. We'll use the following lemma.

\begin{lemma}
\label{lemma:B_n(t)}
Let the conditions of Theorem 4 be satisfied, and let $B_n(t)$ be defined by (\ref{def_B_n(t)}) in Lemma \ref{lemma:decomp}.  Then, for arbitrary $M>0$ and $t\in[-M,M]$:
\begin{align*}
B_n(t)=O_p\left(n^{-5/6}\right).
\end{align*}
\end{lemma}

\begin{proof}
Using Lemma \ref{lemma:rough_upper_bound} again, is is sufficient to show $\tilde B_n(t)=O_p(n^{-5/6})$,
where $\tilde B_n(t)$ is defined by
\begin{align*}
&\tilde B_n(t)\\
&=\int_{s\in[t_0,t_0+n^{-1/3}t)}e^{-1}\left\{\frac{\hat F_n(s-e)-F_0(s-e)}{F_0(s)-F_0(s-e)}
+\frac{\hat F_n(s+e)-F_0(s+e)}{F_0(s+e)-F_0(s)}\right\}\,dF_E(e)\,ds.
\end{align*}
Let $t_0>\e$, where $\e>0$ is defined as in the conditions of Theorem 4. We can write
\begin{align*}
&\int e^{-1}\frac{\hat F_n(s-e)-F_0(s-e)}{F_0(s)-F_0(s-e)}\,dF_E(e)\\
&=\int_{u<s-\e} (s-u)^{-1}\frac{f_E(s-u)}{F_0(s)-F_0(u)}\,\bigl\{\hat F_n(u)-F_0(u)\bigr\}\,du=O_p\left(n^{-1/2}\right),
\end{align*}
uniformly for $s\in[t_0,t_0+n^{-1/3}t)$ by Corollary \ref{extension_corollary} and the continuity of the function
\begin{align*}
s\mapsto (s-u)^{-1}\frac{f_E(s-u)}{F_0(s)-F_0(u)}
\end{align*}
for $s\in[t_0,t_0+n^{-1/3}t)$, if $u$ stays away from $s$.

So:
\begin{align*}
\int_{s\in[t_0,t_0+n^{-1/3}t)}e^{-1}\frac{\hat F_n(s-e)-F_0(s-e)}{F_0(s)-F_0(s-e)}\,dF_E(e)\,ds=O_p\left(n^{-5/6}\right).
\end{align*}

If $t_0\le\e$, the integration interval for $e$ is either empty or of order $n^{-1//3}$, which yields, using $\sup_x|\hat F_n(x)-F_0(x)|=O_p(n^{-1/4})$, in which case:
\begin{align*}
\int_{s\in[t_0,t_0+n^{-1/3}t)}e^{-1}\frac{\hat F_n(s-e)-F_0(s-e)}{F_0(s)-F_0(s-e)}\,dF_E(e)\,ds
=O_p\left(n^{-11/12}\right)=O_p\left(n^{-5/6}\right).
\end{align*}

Similarly,
\begin{align*}
\int_{s\in[t_0,t_0+n^{-1/3}t)}e^{-1}\frac{\hat F_n(s+e)-F_0(s+e)}{F_0(s+e)-F_0(s)}\,dF_E(e)\,ds=O_p\left(n^{-5/6}\right).
\end{align*}
\end{proof}

We finally need the following ``tightness'' lemma, which follows from the negligibility of $B_n(t)$ in (\ref{basic_expansion}) of Lemma \ref{lemma:decomp}, which, in turn, follows from Lemma \ref{lemma:B_n(t)}.

\begin{lemma}
\label{tightness_lemma}
Let the conditions of Theorem 4 be satisfied and let $a_0\in(0,1)$.
Then, for each $\d>0$ and $K_1>0$ a $K_2>0$ can be found such that
\begin{align*}
\P\left\{\sup_{x\in[-K_1,K_1]}n^{1/3}\left\{U_n\bigl(a_0+n^{-1/3}x\bigr)-t_0\right\}>K_2\right\}<\d,
\end{align*}
and
\begin{align*}
\P\left\{\inf_{x\in[-K_1,K_1]}n^{1/3}\left\{U_n\bigl(a_0+n^{-1/3}x\bigr)-t_0\right\}<-K_2\right\}<\e,
\end{align*}
for all large $n$.
\end{lemma}

We now have:
\begin{align*}
&n^{1/3}\left\{U_n(a_0+n^{-1/3}x)-t_0\right\}\\
&=\text{argmin}\Biggl\{t\ge -n^{2/3}t_0:n^{2/3}X_n(t)+\tfrac12c_E f_0(t_0)t^2-n^{1/3}x\left\{G_n(t_0+n^{-1/3}t)-G_n(t_0)\right\}+o_p(1)\Biggr\}\\
&=\text{argmin}\Biggl\{t\ge -n^{2/3}t_0:n^{2/3}X_n(t)+\tfrac12c_E f_0(t_0)t^2-c_Ext+o_p(1)\Biggr\},
\end{align*}
which converges in distribution to the argmin of the process
\begin{align*}
t\mapsto \sqrt{c_E}\,W(t)+\tfrac12c_E f_0(t_0)t^2-c_Ext,\qquad t\in\R,
\end{align*}
where $W$ is two-sided Brownian motion, originating from zero. Theorem 4 now follows from Brownian scaling.

\subsection{Proof of Theorem \ref{th:limit_SMLE}}
\label{subsection:proof_Theorem5.1}
\begin{proof}
This time the adjoint equation (see Section \ref{subsection:score_operators} of this appendix) is (for $F=\hat F_n$):
\begin{align}
\label{phi-eq_SMLE}
\left[A^*b\right](x)&=
\int_{e>0}e^{-1}\int_{s\in(x,x+e)}\frac{\f(s)-\f(s-e)}{\hat F_n(s)-\hat F_n(s-e)}\,ds\,dF_E(e)\nonumber\\
&=\IK((t-x)/h_n)-\int \IK_h((t-y)/h_n)\,d\hat F_n(y),\qquad x\in(0,M_1).
\end{align}
Differentiating the equation w.r.t.\ $x$ we get, letting $h=h_n$:
\begin{align}
\label{inteq_SMLE}
&\int_{e>0}e^{-1}\left\{\frac{\f(x+e)-\f(x)}{\hat F_n(x+e)-\hat F_n(x)}
-\frac{\f(x)-\f(x-e)}{\hat F_n(x)-\hat F_n(x-e)}\right\}\,dF_E(e)=-K_h(t-x).
\end{align}

So we get, using integration by parts, if $\f=\f_{t,\hat F_n}$ solves (\ref{inteq_SMLE}) for $F=\hat F_n$:
\begin{align*}
&\int \IK_h(t-x)\,d\bigl(\hat F_n-F_0\bigr)(x)
=\int\bigl(\hat F_n-F_0\bigr)(x)K_h(t-x)\,dx\\
&=\int\bigl(\hat F_n-F_0\bigr)(x)\int_{e>0}e^{-1}\left\{\frac{\f(x+e)-\f(x)}{\hat F_n(x+e)-\hat F_n(x)}
-\frac{\f(x)-\f(x-e)}{\hat F_n(x)-\hat F_n(x-e)}\right\}\,dF_E(e)\,dx.
\end{align*}
Let $\th_{t,\hat F_n}$ be defined by
\begin{align*}
\th_{t,\hat F_n}(e,s)=\d_1\frac{\f_{t,\hat F_n}(s)}{\hat F_n(s)}+\d_2\frac{\f_{t,\hat F_n}(s)-\f_{t,\hat F_n}(s-e)}{\hat F_n(s)-\hat F_n(s-e)}
-\d_3\frac{\f_{t,\hat F_n}(s-e)}{1-\hat F_n(s-e)}\,,
\end{align*}
where $\d_1=1_{\{s\le e\}}$, $\d_2=1_{\{e<s\le M_1\}}$ and $\d_3=1-\d_1-\d_2$. Then, for $\f=\f_{t,\hat F_n}$:
\begin{align*}
&\int_{x=0}^{M_1}(\hat F_n-F_0)(x)\int_{e=0}^{M_2} e^{-1}\left[\frac{\f(x+e)-\f(x)}{\hat F_n(x+e)-\hat F_n(x)}
-\frac{\f(x)-\f(x-e)}{\hat F_n(x)-\hat F_n(x-e)}\right]\,dF_E(e)\,dx\\
&=-\int_{e=0}^{M_2}e^{-1}\int_{x=e}^{M_1}\frac{\{F_0(x)-F_0(x-e)\}\{\f(x)-\f(x-e)\}}{\hat F_n(x)-\hat F_n(x-e)}\,dx\,dF_E(e)\\
&\qquad\qquad-\int_{e=0}^{M_2}e^{-1}\int_{x=0}^{e}\frac{F_0(x)\f(x)}{\hat F_n(x)}\,dx\,dF_E(e)\\
&\qquad\qquad\qquad\qquad\qquad\qquad+\int_{e=0}^{M_2}e^{-1}\int_{x=M_1}^{M_1+e}\frac{\{1-F_0(x-e)\}\f(x-e)}{1-\hat F_n(x-e)}\,dx\,dF_E(e)\\
&=-\int \th_{t,\hat F_n}(e,s)\,dQ_0(e,s).
\end{align*}
Note that we used, noting $\hat F_n-F_0=1-F_0-\{1-\hat F_n\}$ if $x+e>M_1$:
\begin{align*}
&\int_{e=0}^{M_2}\hat F_n(x)\int_{x=0}^{M_1} e^{-1}\left[\frac{\f(x+e)-\f(x)}{\hat F_n(x+e)-\hat F_n(x)}
-\frac{\f(x)-\f(x-e)}{\hat F_n(x)-\hat F_n(x-e)}\right]\,dF_E(e)\,dx\\
&=\int_{e=0}^{M_2}e^{-1}\int_{x=e}^{M_1}\{\f(x)-\f(x-e)\}\,dx\,dF_E(e)+\int_{e=0}^{M_2}e^{-1}\int_{x=0}^{e}\f(x)\,dx\,dF_E(e)\\
&\qquad\qquad\qquad\qquad\qquad\qquad-\int_{e=0}^{M_2}e^{-1}\int_{x=M_1}^{M_1+e}\f(x-e)\,dx\,dF_E(e)\\
&=0.
\end{align*}
Hence
\begin{align}
\label{fund}
\int \IK_h(t-x)\,d\bigl(\hat F_n-F_0\bigr)(x)=-\int \th_{t,\hat F_n}(e,s,\d_1,\d_2)\,dQ_0(e,s).
\end{align}

Replacing $\f$ by a piecewise constant function $\bar\f$, absolutely continuous w.r.t.\ $\hat F_n$, in the same way as is done on p.\ 290 of \cite{piet_geurt:14}, and defining
\begin{align*}
\bar\th_{t,\hat F_n}(e,s)=\d_1\frac{\bar\f_{t,\hat F_n}(s)}{\hat F_n(s)}+\d_2\frac{\bar\f_{t,\hat F_n}(s)-\bar\f_{t,\hat F_n}(s-e)}{\hat F_n(s)-\hat F_n(s-e)}
-\d_3\frac{\bar\f_{t,\hat F_n}(s-e)}{1-\hat F_n(s-e)}\,,
\end{align*}
we find
\begin{align*}
&\int\left|\th_{t,\hat F_n}(e,s,\d_1,\d_2)-\bar\th_{t,\hat F_n}(e,s,\d_1,\d_2)\right|\,dQ_0(e,s)\\\
&\lesssim\|\hat F_n-F_0\|_2\|\f-\bar\f\|_2=O_p\left(h_n^{-1}n^{-2/3}\right)=O_p\left(n^{-7/15}\right)=o_p\left(n^{-2/5}\right),
\end{align*}
using Lemma \ref{lemma:BM_part} in this appendix and the arguments on p.\ 333 of \cite{piet_geurt:14} (see in particular (11.49)).
Moreover,
\begin{align*}
\int\bar\th_{t,\hat F_n}(e,s)\,d\Q_n(e,s)=0.
\end{align*}
Thus we find:
\begin{align*}
&\int \IK(t-v)/h_n)\,d\bigl(\hat F_n-F_0\bigr)(v)\\
&=\int\bar\th_{t,\hat F_n}(e,s,\d_1,\d_2)\,d\bigl(\Q_n-Q_0\bigr)(e,s)+o_p\left(n^{-2/5}\right).
\end{align*}
Finally,
\begin{align*}
&n^{2/5}\int \IK(t-v)/h_n)\,d\bigl(\hat F_n-F_0\bigr)(v)\\
&= n^{2/5}\int\bar\th_{t,\hat F_n}\,d\bigl(\Q_n-Q_0\bigr)+o_p\left(n^{-2/5}\right)\\
&=n^{2/5}\int\th_{n,t,F_0}\,d\bigl(\Q_n-Q_0\bigr)+o_p(1),
\end{align*}
where
\begin{align*}
\th_{n,t,F_0}(s)
=\d_1\frac{\f_{n,t,F_0}(s)}{F_0(s)}+\d_2\frac{\f_{n,t,F_0}(s)-\f_{t,F_0}(s-e)}{F_0(s)-F_0(s-e)}
-\d_3\frac{\f_{n,t,F_n}(s-e)}{1-F_0(s-e)}\,,
\end{align*}
and $\f_{n,t,F_0}$ solve the integral equation (6.5).
The asymptotic variance $\s^2$ is therefore given by
\begin{align*}
\lim_{n\to\infty}n^{-1/5}\|\th_{n,t,F_0}\|^2_{Q_0}.
\end{align*}
%\begin{align*}
%\lim_{n\to\infty}n^{-1/5}\|\th_{n,t,F_0}\|^2_{Q_0}=\lim_{n\to\infty}n^{-1/5}\int\f_{n,t,F_0}(u)K_{h_n}(t-u)\,du.
%\end{align*}

The expression (6.3) for $\m$ arises from the expansion of the bias
\begin{align*}
\int \IK(t-y)\,dF_0(y)-F_0(t).
\end{align*}
\end{proof}

\subsection{Proof of (8.1)}
\label{subsection:proof_8.1}
\begin{proof}
We consider the ``mean functional'':
\begin{align*}
F\mapsto \int_{x\in[0,M_1]} x\,dF(x)
\end{align*}
The score operator (see section \ref{subsection:score_operators}) is of the form
\begin{align}
\label{score_op_single}
\left[Aa\right](e,s)=E\bigl[a(V)|(E,S)=(e,s)\bigr]=\frac{\int_{v\ge0,\,v\in(s-e,s]}a(v)\,dF(v)}{F(s)-F(s-e)}\,.
\end{align}
The adjoint is given by
\begin{align}
\label{adjoint}
\left[A^*b\right](v)=E\bigl[b(E,S)|V=v\bigr]=\int_{e>0}e^{-1}\int_{s\in(v,v+e)} b(e,s)\,ds\,dF_E(e).
\end{align}

\begin{figure}[!ht]
\centering
\includegraphics[width=0.5\textwidth]{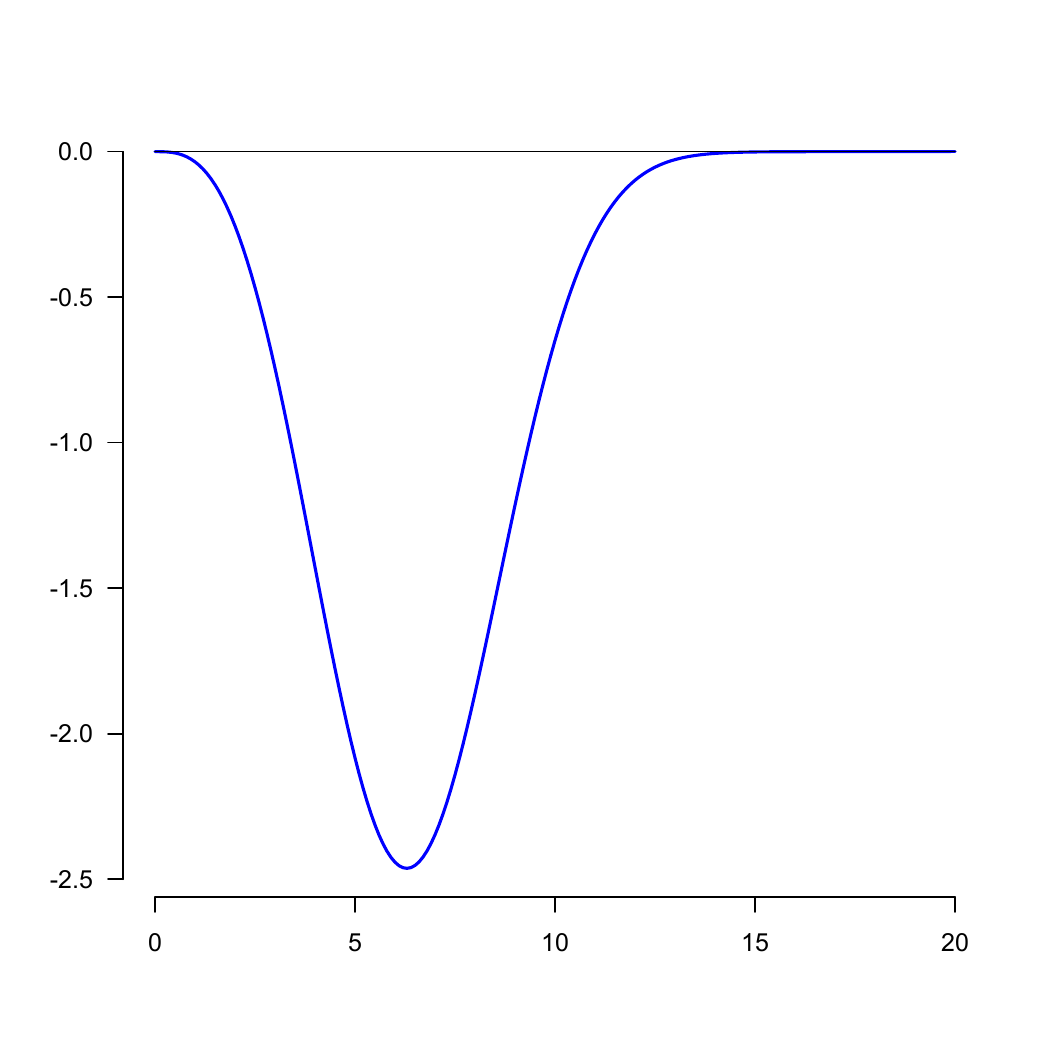}
\caption{The function $\f_{F_0}$, solving (\ref{phi-diff2}) for $F=F_0$ (Weibull).}
\label{figure:phi_mean2}
\end{figure}
 
Defining
\begin{align*}
\f_F(u)=\int_{y\le u}a(y)\,dF(y),
\end{align*}
we get the following equation for $\f_F$:
\begin{align}
\label{phi-eq2}
[A^*A\,a](v)&=\int_{e>0}e^{-1}\int_{s\in(v,v+e)}\frac{\f_F(s)-\f_F(s-e)}{F(s)-F(s-e)}\,ds\,dF_E(e)\nonumber\\
&=v-\int x\,dF(x),\qquad v\in[0,M_1].
\end{align}
By differentiating w.r.t.\ $v$, we find that $\f_F$ is also the solution of the following equation in $\f$:
\begin{align}
\label{phi-diff2}
\int_{e>0}e^{-1}\left[\frac{\f(v+e)-\f(v)}{F(v+e)-F(v)}-\frac{\f(v)-\f(v-e)}{F(v)-F(v-e)}\right]\,dF_E(e)=1,\qquad v\in[0,M_1].
\end{align}
The canonical gradient $\th_F$ is again given by:
\begin{align*}
\th_F(e,s)=\frac{\f_F(s)-\f_F(s-e)}{F(s)-F(s-e)}
=\d_1\frac{\f_F(s)}{F(s)}+\d_2\frac{\f_F(s)-\f_F(s-e)}{F(s)-F(s-e)}-\d_3\frac{\f_F(s-e)}{F(s-e)}\,,
\end{align*}
where $\d_1=\{s\le e\}$, $\d_2=\{e<s\le M_1\}$ and $\d_3=1-\d_1-\d_2$.
The solution $\f_F$ is shown in Figure \ref{figure:phi_mean2} for $F=F_0$, where we chose $F_0$ to have a Weibull distribution function.

We then get, along the lines of Chapter 10 of \cite{piet_geurt:14}, the following asymptotic normality result:
\begin{align}
\label{convergence_mean2}
\sqrt{n}\left\{\int x\,d\hat F_n(x)-\int x\,dF_0(x)\right\}\stackrel{{\cal D}}\longrightarrow N(0,\s^2),
\end{align}
where $N(0,\s^2)$ is a normal distribution with mean zero and variance
\begin{align*}
\s^2=\bigl\|\tilde\th_{F_0}\bigr\|_Q^2=-\int_0^{M_1}\f_{F_0}(x)\,dx.
\end{align*}

In fact, using $\f(M_1)=\int a(x)\,dF(x)=0$, we get:
\begin{align*}
&\bigl\|\tilde\th_{F_0}\bigr\|_Q^2=\langle A a,\tilde\th_{F_0}\rangle^2_Q
=\langle a,A^*\tilde\th_{F_0}\rangle_{F_0}=\langle a,\tilde\k_{F_0}\rangle_{F_0}=\int a(x)\tilde\k_{F_0}(x)\,dF_0(x)\\
&=\int_{x=0}^{M_1}a(x)\int_0^x\tilde\k_{F_0}'(u)\,du\,dF_0(x)=\int_{u=0}^{M_1}\tilde\k_{F_0}'(u)\left\{\int_{x=u}^{M_1}a(x)\,dF_0(x)\right\}\,du\\
&=\int_{u=0}^{M_1}\tilde\k_{F_0}'(u)\,\left\{\f_{F_0}(M_1)-\f_{F_0}(u)\right\}\,du =-\int_{u=0}^{M_1}\f_{F_0}(u)\,du.
\end{align*}
\end{proof}

\subsection{Score operators and adjoint equations}
\label{subsection:score_operators}
We need the concept of Hellinger differentiability. Let the unknown distribution $P$ on $({\cal Y,B})$ be contained in some class of probability
measures $\cal P$, which is dominated by a $\sigma$-finite measure $\mu$. Let $P$ have density $p$
with respect to $\mu$.  We are interested in estimating some real-valued function $\Theta(P)$ of $P$.

Let, for some $\d > 0$, the collection $\{P_t\}$ with $t \in (0,\d)$ be a 1-dimensional
parametric submodel which is smooth in the following sense:
$$
\int \left[ t^{-1}(\sqrt{p_t}-\sqrt{p})-
\dfrac{1}{2}a\sqrt{p} \right]^2d\mu
\rightarrow 0 \;\;\; \mbox{as } t \downarrow 0, \mbox{ for some } a \in L_2(P)
$$ Such a submodel is called {\it Hellinger differentiable}. This property can be seen as an
$L_2$ version of the pointwise differentiability of $\log p_t(x)$ at $t=0$ (with $p_0=p$),
with the function $a$ playing the role of the so-called {\it score-function} $\left.\frac{\partial}{\partial t}\log
p_t(\cdot) \right|_{t=0}$ in classical statistics. For we have, 
$$
\lim_{t \downarrow 0} \frac{\sqrt{p_t}-\sqrt{p_0}}{t}=\frac{1}{2\sqrt{p_0}} 
\left. \frac{\partial}{\partial t} p_t \right|_{t=0}=\frac{1}{2} \left( \left.
\frac{\partial}{\partial t}  \log p_t \right|_{t=0} \right) 
\sqrt{p_0}=\frac{1}{2} a
\sqrt{p_0}
$$
Therefore, $a$ is also called the {\it score function} or {\it score}.
The collection of scores $a$ obtained by considering all possible one-dimensional Hellinger differentiable parametric submodels, is a linear space, the {\it tangent space} at $P$, denoted by $T(P)$.

In the models for inverse problems, to be considered here, we work with a so-called {\it hidden space} and an {\it observation space}. All Hellinger differentiable submodels that can be
formed in the observation space, together with the corresponding score functions, are induced by the Hellinger differentiable
paths of densities on the hidden space, according to the following theorem:

\begin{theorem} \label{imagepath} Let ${\cal P} \ll \mu$ be a class of probability measures on the
hidden space $({\cal Y,B})$.  $P \in \cal P$ is induced by the random vector $Y$. Suppose that the
path $\{P_t\}$ to $P$ satisfies 
$$
\int \left[ t^{-1}(\sqrt{p_t}-\sqrt{p})-\dfrac{1}{2}a \sqrt{p} \right] ^2d\mu
\rightarrow 0
\;\;\; \mbox{as } \;t
\downarrow 0
$$ for some $a \in L_2^0(P)$, where the superscript $0$ means that $\int a\,dP=0$.\\ Let  $S:({\cal Y,B}) \rightarrow ({\cal Z,C})$ be a measurable
mapping. Suppose that the induced measures $Q_t=P_tS^{-1}$ and $Q=PS^{-1}$ on
$({\cal Z,C})$ are absolutely continuous with respect to $\mu S^{-1}$, with densities
$q_t$ and $q$. Then the path $\{Q_t\}$ is also Hellinger differentiable, satisfying
$$
\int \left[ t^{-1}(\sqrt{q_t}-\sqrt{q})-\dfrac{1}{2} \overline{a}\sqrt{q}
\right] ^2d\mu S^{-1} \rightarrow 0 \;\;\;
\mbox{as }\; t \downarrow 0
$$  with $\overline{a}(z)=E_P(a(Y)|S=z)$. 
\end{theorem}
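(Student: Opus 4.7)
The plan is to establish Hellinger differentiability of $\{Q_t\}$ with score $\bar{a}$ by three steps: pointwise a.e.\ convergence of the difference quotient $R_t(z) := t^{-1}(\sqrt{q_t}(z)-\sqrt{q}(z))$ to $\tfrac12\bar{a}(z)\sqrt{q}(z)$; construction of a uniformly-integrable dominating function for $\{|R_t|^2\}$; and a weak-plus-norm upgrade to strong $L_2(\mu S^{-1})$ convergence. The candidate $\bar a = E_P(a \mid S)$ lies in $L_2^0(Q)$ automatically by the conditional Jensen inequality applied to $a \in L_2^0(P)$.

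For the first step I would invoke a regular disintegration $\mu = \int \mu^z \, d(\mu S^{-1})(z)$ along the fibers of $S$, giving $q_t(z) = \int p_t\,d\mu^z$, and hence, with $r_t := t^{-1}(\sqrt{p_t}-\sqrt{p})$ and the factorisation $p_t - p = (\sqrt{p_t}-\sqrt{p})(\sqrt{p_t}+\sqrt{p})$,
\begin{equation*}
R_t(z) \;=\; \frac{1}{\sqrt{q_t}(z)+\sqrt{q}(z)}\int r_t\bigl(\sqrt{p_t}+\sqrt{p}\bigr)\, d\mu^z.
\end{equation*}
By hypothesis $r_t \to \tfrac12 a\sqrt{p}$ and $\sqrt{p_t}\to\sqrt{p}$ in $L_2(\mu)$, so along a subsequence the convergence is $\mu^z$-a.e.\ for $\mu S^{-1}$-a.e.\ $z$; the inner integral then tends to $\int a p\, d\mu^z = \bar{a}(z)q(z)$ by the very definition of $\bar a$, and since $\sqrt{q_t}+\sqrt{q}\to 2\sqrt{q}$ one obtains $R_t(z) \to \tfrac12\bar{a}(z)\sqrt{q}(z)$ pointwise on $\{q>0\}$, with both sides zero on $\{q=0\}$.

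For the second step, Cauchy--Schwarz on the fiber gives $\int(\sqrt{p_t}+\sqrt{p})^2\,d\mu^z \leq (\sqrt{q_t}(z)+\sqrt{q}(z))^2$, and a further Cauchy--Schwarz applied to the above representation of $R_t$ yields
\begin{equation*}
|R_t(z)|^2 \;\leq\; \int r_t^2 \, d\mu^z \;=:\; g_t(z).
\end{equation*}
The dominating functions $g_t$ satisfy $\int g_t\,d(\mu S^{-1}) = \|r_t\|^2_{L_2(\mu)} \to \tfrac14 \|a\|^2_{L_2(P)}$, and moreover $g_t \to \int \bigl(\tfrac12 a\sqrt{p}\bigr)^2 d\mu^z$ in $L_1(\mu S^{-1})$, as a consequence of the fact that $r_t^2 \to \tfrac14 a^2 p$ in $L_1(\mu)$ (which follows from the $L_2(\mu)$-convergence of $r_t$ by the polarization-type estimate $\bigl\||r_t|^2-|\tfrac12 a\sqrt{p}|^2\bigr\|_{L_1} \leq \|r_t-\tfrac12 a\sqrt{p}\|_{L_2}\cdot\|r_t+\tfrac12 a\sqrt{p}\|_{L_2}$). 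Hence $\{g_t\}$ is uniformly integrable in $L_1(\mu S^{-1})$, and the domination $|R_t|^2 \leq g_t$ transfers uniform integrability to $\{|R_t|^2\}$.

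The main obstacle, addressed by the third step, is upgrading the pointwise-a.e.\ convergence of $R_t$ to convergence in $L_2(\mu S^{-1})$; indeed, the crude contractivity bound $\limsup \|R_t\|^2 \le \tfrac14\|a\|_{L_2(P)}^2$ alone overshoots the target $\tfrac14\|\bar a\|_{L_2(Q)}^2$ by the Jensen defect and is therefore insufficient. Combining steps one and two with the Vitali convergence theorem yields $|R_t|^2 \to \bigl|\tfrac12\bar{a}\sqrt{q}\bigr|^2$ in $L_1(\mu S^{-1})$, that is, $\|R_t\|_{L_2(\mu S^{-1})} \to \tfrac12\|\bar{a}\|_{L_2(Q)}$. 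Together with the weak $L_2(\mu S^{-1})$-convergence $R_t \rightharpoonup \tfrac12\bar{a}\sqrt{q}$, which follows from the pointwise-a.e.\ convergence plus the uniform $L_2$-bound by a standard subsequence argument, norm convergence implies strong convergence in the Hilbert space $L_2(\mu S^{-1})$, which is exactly the Hellinger differentiability of $\{Q_t\}$ with score $\bar{a}$.
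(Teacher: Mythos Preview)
The paper does not prove this theorem: immediately after the statement it writes ``For a proof, see \cite{bickel:98}.'' So there is no in-paper argument to compare against; I can only assess your proposal on its own terms.

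Your overall strategy---fiberwise disintegration, the Cauchy--Schwarz bound $|R_t|^2\le g_t:=\int r_t^2\,d\mu^z$, and a Vitali/uniform-integrability upgrade---is sound and is essentially one of the standard routes to this result. Two points deserve tightening. First, the very use of a regular disintegration $\mu=\int\mu^z\,d(\mu S^{-1})(z)$ is an extra hypothesis: it is guaranteed on standard Borel spaces but not on the bare measurable spaces in the statement, so you should either add that assumption or avoid disintegration. Second, and more substantively, in Step~1 you pass from $\mu^z$-a.e.\ convergence of $r_t(\sqrt{p_t}+\sqrt{p})$ to convergence of the fiber integral $\int r_t(\sqrt{p_t}+\sqrt{p})\,d\mu^z$ without justification; pointwise convergence alone does not allow this interchange. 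The clean fix is to observe directly that $r_t(\sqrt{p_t}+\sqrt{p})=t^{-1}(p_t-p)\to ap$ in $L_1(\mu)$ (by the same Cauchy--Schwarz splitting you use later), whence the fiber integrals converge in $L_1(\mu S^{-1})$ and hence, along a subsequence, $\mu S^{-1}$-a.e.\ to $\bar a(z)q(z)$. With that in place, your Step~3 can in fact be shortened: since $|R_t-\tfrac12\bar a\sqrt q|^2\le 2g_t+\tfrac12\bar a^2 q$ is uniformly integrable and tends to zero a.e.\ along the subsequence, Vitali gives $\|R_t-\tfrac12\bar a\sqrt q\|_{L_2}\to 0$ directly, and the separate weak-plus-norm argument is unnecessary. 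A final sub-subsequence argument then yields the full limit as $t\downarrow 0$.
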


For a proof, see \cite{bickel:98}. Note that $\overline{a} \in
L_2^0(Q)$. The relation between the scores $a$ in the hidden tangent space
$T(P)$ and the induced scores $\overline{a}$ is expressed by the mapping
\begin{align}
\label{score_operator}
A:a(\cdot) \mapsto E_P(a(Y)|S=\cdot).
\end{align}
This mapping is called the {\it score operator}. It is continuous and linear. Its range is the
induced tangent space, which is contained in $L_2^0(Q)$.

Now $\Theta:{\cal P} \rightarrow \R$ is pathwise differentiable at $P$ if for each Hellinger
differentiable path $\{P_t\}$, with corresponding score $a$, we have 
$$
\lim_{t \downarrow 0} t^{-1}(\Theta(P_t)-\Theta(P))=\Theta'_P(a),
$$
where
$$
\Theta'_P:T(P) \rightarrow \R 
$$
is continuous and linear.

$\Theta'_P$ can be written in an inner product form.   Since the tangent space $T(P)$ is a subspace of the
Hilbert-space $L_2(P)$, the continuous linear functional $\Theta'_P$ can be extended to a
continuous linear functional $\overline{\Theta}'_P$ on $L_2(P)$. By the Riesz representation
theorem, to $\overline{\Theta}'_P$ belongs a unique
$\th_P \in L_2(P)$, called the {\em gradient}, satisfying
$$ 
\overline{\Theta}'_P(h)=<\th_P,h>_P \mbox{ for all } h \in L_2(P).
$$

One gradient is playing a special role, which is
obtained by extending $T(P)$ to the Hilbert space $\overline{T(P)}$.  Then, the extension of
$\Theta'_P$ {\it is} unique, yielding the {\em canonical gradient} or {\em efficient influence
function} $\tilde{\th}_P \in
\overline{T(P)}$.  This canonical gradient is also obtained by taking the orthogonal projection of
any gradient $\th_P$, obtained after extension of $\Theta'_P$, into
$\overline{T(P)}$. Hence
$\tilde{\th}_P$ is the gradient with minimal norm among all gradients and we have (Pythagoras):
$$
\|\th_P\|_P^2=\|\tilde{\th}_P\|_P^2+\|\th_P-\tilde{\th}_P\|_P^2.
$$
  
In our censoring model, differentiability of a functional $\Theta(Q)$ along the induced Hellinger
differentiable paths in the observation space can be proved by looking at the structure of the
adjoint $A^*$ of the score operator $A$ according to theorem \ref{difth} below, which was first
proved in
\cite{VdVaart:91} in a more general setting, allowing for Banach space valued functions as
estimand. Then the proof is slightly more elaborate. 

Recall that the adjoint of a continuous linear mapping
$A: G
\rightarrow H$, with $G$ and $H$  Hilbert-spaces, is the unique continuous linear mapping $A^*:H
\rightarrow G$ satisfying 
$$ <Ag,h>_H=<g,A^*h>_G \;\; \forall g \in G, h \in H.
$$ 
The score operator from (\ref{score_operator}) is playing the role of
$A$. Its adjoint can be written as a conditional  expectation as well.  If $Z \sim PS^{-1}$, then: 
$$ [A^* \, b](y)=E_P(b(Z)|Y=y) \;\; \mbox{ a.e.-}[P]
$$
\begin{theorem}\label{difth}  Let ${\cal Q=P}S^{-1}$ be a class of probability measures on the image
space of the measurable  transformation S.  Suppose the functional $\Theta:{\cal Q} \rightarrow \R$ can be written as
$\Theta(Q)=K(P)$ with
$K$ pathwise differentiable at $P$ in the hidden space, having canonical gradient 
$\tilde{\k}$.\\ Then $\Theta$ is differentiable at $Q_P \in {\cal Q}$ along the collection of
induced paths in the observation space obtained via Theorem~\ref{imagepath} if and only if  
\begin{equation} \label{caninr}
\tilde{\k} \in {\cal R}(A^*), 
\end{equation}
where $A$ is the score operator.   If (\ref{caninr}) holds, then the canonical gradient $\tilde{\th}$ of
$\Theta$ and 
$\tilde{\k}$ of $K$ are related by 
$$
\tilde{\k}=A^*\tilde{\th}.
$$
\end{theorem}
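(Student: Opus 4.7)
The plan is to treat this as a Hilbert-space argument built on the chain rule for pathwise derivatives, the defining identity of the adjoint $A^*$, and the Riesz representation theorem. The starting observation is that, because the induced measures arise from the hidden ones through $S$, we have $\Theta(Q_t)-\Theta(Q)=K(P_t)-K(P)$, and pathwise differentiability of $K$ in the hidden space yields
\begin{equation*}
t^{-1}\{\Theta(Q_t)-\Theta(Q)\}\longrightarrow \langle \widetilde{\kappa},a\rangle_P
\end{equation*}
whenever the hidden path $\{P_t\}$ has score $a\in T(P)$. The whole question reduces to: when can the right-hand side be rewritten as an inner product in $L_2(Q)$ against the induced score $\bar{a}=Aa$, which is precisely what differentiability along induced paths requires?

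Sufficiency is immediate. If $\widetilde{\kappa}=A^*\widetilde{\theta}$ for some $\widetilde{\theta}\in\overline{T(Q_P)}$, then by the defining identity for the adjoint,
\begin{equation*}
\langle \widetilde{\kappa},a\rangle_P=\langle A^*\widetilde{\theta},a\rangle_P=\langle \widetilde{\theta},Aa\rangle_Q=\langle \widetilde{\theta},\bar{a}\rangle_Q,
\end{equation*}
so $\Theta'_{Q_P}(\bar{a})=\langle \widetilde{\theta},\bar{a}\rangle_Q$ is a continuous linear functional on the induced tangent space, with $\widetilde{\theta}$ as canonical gradient of $\Theta$.

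For necessity, I would assume $\Theta$ is pathwise differentiable at $Q_P$ along the paths produced by Theorem~\ref{imagepath}. Then the assignment $\bar{a}\mapsto \langle \widetilde{\kappa},a\rangle_P$ must be well defined on $\mathcal{R}(A)$ (independent of the choice of preimage $a$, forcing $\widetilde{\kappa}\perp \mathcal{N}(A)\cap T(P)$) and must extend to a continuous linear functional on the closure of the induced tangent space $\overline{\mathcal{R}(A)}\subseteq L_2^0(Q)$. The Riesz representation theorem then produces a unique $\widetilde{\theta}\in\overline{\mathcal{R}(A)}=\overline{T(Q_P)}$ such that $\langle \widetilde{\kappa},a\rangle_P=\langle \widetilde{\theta},Aa\rangle_Q=\langle A^*\widetilde{\theta},a\rangle_P$ for every $a\in T(P)$. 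Hence $\widetilde{\kappa}-A^*\widetilde{\theta}$ is orthogonal to $T(P)$ in $L_2(P)$; and since $\widetilde{\kappa}\in\overline{T(P)}$ by canonicity while $A^*\widetilde{\theta}\in\overline{T(P)}$ as well (see the next paragraph), the difference lies in $\overline{T(P)}\cap\overline{T(P)}^\perp=\{0\}$, yielding $\widetilde{\kappa}=A^*\widetilde{\theta}$.

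The main obstacle is precisely the orthogonality bookkeeping at the end of the necessity argument. Taken naively as a map $L_2(P)\to L_2(Q)$, the adjoint $A^*$ need not land in $\overline{T(P)}$, and one only recovers $\widetilde{\kappa}=A^*\widetilde{\theta}$ modulo $T(P)^\perp$. The correct framing is to restrict $A$ to the closed hidden tangent space, regarding it as an operator $\overline{T(P)}\to\overline{T(Q_P)}$; its adjoint then automatically has codomain $\overline{T(P)}$, which is what closes the argument. Verifying this restriction is legitimate amounts to checking that the induced tangent space is exactly $\overline{A\,\overline{T(P)}}$, which is precisely the content of Theorem~\ref{imagepath}. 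Automatic continuity of the pathwise derivative on the closed induced tangent space, which is what powers the Riesz step, is built into the definition of pathwise differentiability being assumed for $\Theta$.
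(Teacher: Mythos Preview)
The paper does not actually prove this theorem; it merely states it and attributes the proof to \cite{VdVaart:91}, remarking only that the original argument is ``slightly more elaborate'' because it is carried out for Banach-space-valued functionals. There is therefore nothing in the paper to compare your proposal against.

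On its own merits, your Hilbert-space argument is the standard route and is correct. The sufficiency direction is immediate from the adjoint identity, exactly as you write it. For necessity, the key steps are (i) well-definedness of $\bar a\mapsto\langle\widetilde\kappa,a\rangle_P$ on $\mathcal R(A)$, which indeed follows because two hidden paths with the same induced score give the same value of $\Theta'_{Q_P}(\bar a)$; (ii) continuity, which is part of the assumed differentiability of $\Theta$; and (iii) the Riesz step producing $\widetilde\theta\in\overline{T(Q_P)}$. Your observation that one must regard $A$ as a map $\overline{T(P)}\to\overline{T(Q_P)}$ so that its adjoint automatically takes values in $\overline{T(P)}$ is precisely the point that allows the orthogonality argument $\widetilde\kappa-A^*\widetilde\theta\in\overline{T(P)}\cap T(P)^\perp=\{0\}$ to close. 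In the fully nonparametric setting of the paper one has $\overline{T(P)}=L_2^0(P)$, so this subtlety is invisible there, but you are right to flag it in general.
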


\section{Acknowledgements}
\label{section:acknowledgements}
I want to thank the referees for their constructive remarks.

\bibliographystyle{imsart-nameyear}
\bibliography{cupbook}

\end{document}